\newtheorem{theorem}{Theorem}[section] 
\newtheorem{lemma}[theorem]{Lemma} 
\newtheorem{proposition}[theorem]{Proposition} 
\newtheorem{corollary}[theorem]{Corollary} 
\theoremstyle{definition} 
\newtheorem{definition}[theorem]{Definition}
\newtheorem{remark}[theorem]{Remark} 
\newtheorem{example}[theorem]{Example} 
\journal{ } 
\begin{document} 
\begin{frontmatter} 
\title{Discrete homotopic distance between Lipschitz maps} 
\author[]{Elahe~Hoseinzadeh\corref{cor2}}
\ead{hoseinzadeh.e@gmail.com}
\author[]{Hanieh~Mirebrahimi\corref{cor1}}
\ead{h_mirebrahimi@um.ac.ir}
\author[]{Hamid~Torabi\corref{cor3}}
\ead{h.torabi@ferdowsi.um.ac.ir}
\author[]{Ameneh~Babaee\corref{cor4}}
\ead{ambabaee057@gmail.com}
\address{Department of Pure Mathematics, Ferdowsi University of Mashhad,\\ 
P.O.Box 1159-91775, Mashhad, Iran.} 
\cortext[cor1]{Corresponding author} 
\begin{abstract} 
In this paper, we investigate a discrete version of the homotopic distance between two $s$-Lipschitz maps for $s \geq 0$. This distance is defined by specifying a step length $r$ to which some homotopy relation corresponds. In spaces with a significant number of holes, where no continuous homotopy exist and the homotopic distance equals infinite, the discrete homotopic distance provides a meaningful classification by effectively ignoring smaller holes. We show that the discrete homotopic distance $D_r$ generalizes key concepts such as the discrete Lusternik-Schnirelmann category $\text{cat}_r$ and the discrete topological complexity $\text{TC}_r$. Furthermore, we prove that $D_r$ is invariant under discrete homotopy relations. This approach offers a flexible framework for classifying $s$-Lipschitz maps, loops, and paths based on the choice of $r$.
\end{abstract} 

\begin{keyword} 
Homotopic distance\sep Discrete homotopy\sep Discrete topological complexity. 
\MSC[2010]{55M30, 55P10, 55S37} 
\end{keyword} 
\end{frontmatter} 
\section{Introduction} 

Mac\'{i}as-Virg\'{o}s and others \cite{Mac'ias} introduced the concept of homotopic distance as a measure of distance between two  maps,  based on the homotopy criterion. Note that by "map" we mean a continuous map. 
This measure can help to classify and estimate maps, particularly those for which the distance is zero.
The homotopic distance between two maps is defined as the minimum cardinality minus $1$ of any open covering of the domain such that the restricted maps on each element of the covering are homotopic. This concept is homotopy invariant and, in certain special cases, coincides with other topological invariants such as topological complexity and Lusternik-Schnirelmann category. This invariant serves as a useful tool for solving open problems in topological complexity by providing a unified framework to study homotopy theory. The systematic unification of proofs for various properties of these invariants not only offers a more efficient approach but also facilitates the exploration of new insights \cite{Mac'ias}.
In the definition of homotopic distance, the deformation of the restricted maps is continuous. However, there are many cases where continuity is not necessary; Especially if a homotopy process cannot be found. Therefore, the homotopic distance may not provide an appropriate solution for situations where no continuous deformation exists. 
Many spaces have structures with a large number of holes, making it challenging to create a continuous mapping.
In these spaces, one may propose to ignore small holes. This solution has already been widely used by robots moving  discretely and point by point.
While some robots are designed to move continuously, others-such as linear industrial robots, automated assembly robots, research robots, and service robots operating in public spaces-rely on discrete motions. These robots use discrete movements to avoid obstacles and prevent collisions. For robots that move discretely, the topology of the environment is often modelled point-to-point, typically as a grid or graph of nodes and connections \cite{Borat1,Vergilia, Fern}. 
 Homotopy is a kind of motion and holes are obstacles for this movement. Accordingly, discretization seems to be a suitable solution for these spaces. Digital modes are an attempt to discretize the concepts.
Borat \cite{Borat1} introduced the concept of digital homotopic distance, particularly used in digital images and digital functions. Then the distance between simplicial maps were defined and studied in \cite{Borat2}.

In this paper, we introduce a new version of homotopic distance to offer innovative perspectives on topological structures in applications such as motion planning algorithms, image analysis, and data science, where discrete deformations are utilized. 
In discrete homotopic distance, the structure of the space and the map remain unchanged, but the deformation and homotopy between maps are discretized by using Lipschitz functions. This discrete deformation allows maps to traverse holes, whereas in continuous homotopy, even a small hole can be a significant obstacle to be deformed. In essence, discrete homotopic distance enables maps to bypass holes by taking larger steps. This kind of homotopy was defined in \cite{hasanzadeh} as a generalization of $r$-homotopy.
Barcelo defined $r$-homotopy for Lipschitz maps as the combination of continuous paths within a discrete framework \cite{Barcelo}. 

The paper is organized as follows: In Section 2, we first recall the concept of discrete homotopy, termed $(s, r)$-homotopy, to introduce the discrete homotopic distance denoted by $D_r$. This value depends inversely on $r$; Naturally, taking longer steps leads to arrive faster. As an example, we present two non-homotopic Lipschitz functions whose discrete homotopic distance vanishes for sufficiently large $r$.

In Section 3, we investigate the relationship between the discrete homotopic distance and some discrete categories defined for maps, such as the discrete Lusternik-Schnirelmann category and discrete topological complexity being previously defined in \cite{hasanzadeh}.
It is important to note that the homotopic distance encompasses well-known invariants, including the Lusternik-Schnirelmann category (cat) and topological complexity (TC). We recall the discrete analogues of these invariants and demonstrate that $\text{cat}_r$ and $\text{TC}_r$ are special cases of $D_r$. By utilizing these three theorems, many significant results regarding the $\text{cat}_r$ and $\text{TC}_r$ can be derived.

In Section 4, we prove three propositions to establish useful upper and lower bounds for $D_r$. These propositions illustrate the behaviour of $D_r$ for certain compositions of maps.  
In Section 5, we demonstrate that the discrete homotopic distance is homotopy invariant up to a coefficient in certain cases. More precisely, the value of $D_r(f, g)$ for two maps $f$ and $g$ changes by a factor of $r$ if the maps are composed with a map having a homotopic inverse.


\section{Discrete homotopic distance}

In this section, we intend to introduce discrete homotopic distance for two Lipschitz maps. To define this distance, we need first to recall the notion of $(s, r)$-homotopy, a generalization of $r$-homotopy, for two scales $s,r > o$. 
The notion of $r$-homotopy, $r >0$, was defined by Barcelo as a relation between two $r$-Lipschitz maps, namely  $f, g: X \to Y$. The map $f$ is called $r$-homotopic to $g$ if there exists a non-negative integer $m$ and map $F: X \times [m] \to Y$ such that $F(-,0) = f$ and $F(-,m) = g$  whenever $F(-,i)$, for $i \in [m]$, and $F(x,-)$, for $x \in X$, are $r$-Lipschitz maps  \cite{Barcelo}. 
Note that the set $[m] := \{0, \ldots, m\}$ is equipped with the metric $d(a, b)=|a - b|$ and the Cartesian product $X \times [m] $ is equipped with the $\ell^1$-metric.
It is known that for metric spaces $ X, Y$  and $r > 0$, a map $f: X \to Y$ is called $r$-Lipschitz if $d_Y(f(x_1), f(x_2)) \leq r d_X(x_1, x_2)$, for every $x_1, x_2 \in X$. 
The most important difference between homotopy and $r$-homotopy is the values admitted by the parameter of time; For the classic homotopy,  time varies continuously between $t =0$ and $t=1$. But for the $r$-homotopy, time takes certain moments $t=0, t= 1, \ldots, t = m$. The notion of $r$-homotopy was generalized to $(s,r)$-homotopy as follows.

\begin{definition}[\cite{hasanzadeh}]\label{de2.1}
Let $s,r >0$ be real numbers. A map $F: X\times[m]\rightarrow Y$ is an $(s, r)$-homotopy between $f$ and $g$, denoted by $F: f \simeq_{(s,r)} g$, if
\begin{itemize}
\item [1.]
For all $x$ in $X$, $F(x, -):[m] \to Y$ is an $r$-Lipschitz map,
\item [2.]
For every $i\in [m]$, the map $F(-, i): X \to Y$ is an $s$-Lipschitz map,
\item [3.]
$F(x, 0) = f(x)$,
 and
$F(x, m) = g(x)$.
\end{itemize}

\end{definition}

As mentioned in the definition of $r$-homotopy,  the step length  is  the constant $r$ for vertical maps, $F(x, -)$'s, and horizontal maps $F(-, i)$'s. But in Definition \ref{de2.1}, the step length of   $F(x, -)$'s is equal to $r$ and for $F(-, i)$'s, it equals $s$. By this modification, one can extend  and use discrete homotopy theory for classifying spaces and maps  more consistently. Thus we need to know whether the $(s,r)$-homotopy is an equivalence relation.
Clearly the relation of $(s,r)$-homotopy is  reflexive and symmetric. Proposition \ref{pr2.2} verifies the transitivity of this relation.

\begin{proposition}\label{pr2.2}
Let $s, r >0$ be two real numbers and $f,g: X \to Y$ be two maps.
If $f\simeq_{(s,r)} g$ and $g\simeq_{(s,r)} h$, then $f\simeq_{(s,r)} h$. 
\end{proposition}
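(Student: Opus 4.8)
The plan is to build the required homotopy by concatenating $F$ and $G$ end to end. Suppose $F : X \times [m] \to Y$ realizes $f \simeq_{(s,r)} g$ and $G : X \times [n] \to Y$ realizes $g \simeq_{(s,r)} h$. I would define $H : X \times [m+n] \to Y$ by
\[
H(x,i) = \begin{cases} F(x,i), & 0 \le i \le m, \\ G(x,i-m), & m \le i \le m+n. \end{cases}
\]
The two clauses overlap at $i = m$, where $F(x,m) = g(x) = G(x,0)$, so $H$ is well defined. The boundary condition (item~3 of Definition~\ref{de2.1}) is then immediate, since $H(x,0) = F(x,0) = f(x)$ and $H(x,m+n) = G(x,n) = h(x)$.

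Next I would check the horizontal condition (item~2). For a fixed level $i \in [m+n]$, either $i \le m$, in which case $H(-,i) = F(-,i)$, or $i \ge m$, in which case $H(-,i) = G(-,i-m)$; in both cases the slice is $s$-Lipschitz because $F$ and $G$ are $(s,r)$-homotopies, and at $i = m$ the two descriptions agree, so there is no conflict.

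The one step that genuinely requires an argument is the vertical condition (item~1), namely that $H(x,-) : [m+n] \to Y$ is $r$-Lipschitz for every $x$. When two levels $a,b$ lie both in $[0,m]$ or both in $[m,m+n]$, the estimate follows directly from the $r$-Lipschitz property of $F(x,-)$ or $G(x,-)$. The remaining case is $a \le m \le b$, which I would handle with the triangle inequality through the junction level $m$:
\[
d_Y\bigl(H(x,a),H(x,b)\bigr) \le d_Y\bigl(H(x,a),H(x,m)\bigr) + d_Y\bigl(H(x,m),H(x,b)\bigr) \le r(m-a) + r(b-m) = r\,|a-b|.
\]
Here the key point, and the only place transitivity could conceivably break down, is that the metric on $[m+n]$ is additive along the segment, so that $(m-a)+(b-m)$ collapses exactly to $|a-b|$; this is precisely what allows the two separate $r$-Lipschitz estimates to splice together without any loss in the constant. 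Once item~1 is verified, $H$ is an $(s,r)$-homotopy from $f$ to $h$, establishing transitivity.
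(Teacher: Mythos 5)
Your proposal is correct and follows essentially the same route as the paper: concatenate $F$ and $G$ at the junction level, verify the boundary and horizontal ($s$-Lipschitz) conditions by cases, and handle the only nontrivial point — the vertical $r$-Lipschitz estimate across the junction — by the triangle inequality through the common value $g$, using additivity of the metric on the integer interval. The paper's proof does exactly this (with the roles of $m$ and $n$ swapped and a half-open split of the cases), so there is nothing to add.
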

\begin{proof}
Let $f\simeq_{(s,r)} g$ and $g \simeq_{(s,r)} h$. Then there exist  $(s,r)$-homotopy functions $F: X \times [n] \to Y$ and $G: X \times [m] \to Y$ such that $F(x, 0)=f$, $F(x, n)=g$, $G(x, 0) = g$ and $G(x, m) = h$. Define $H: X \times [n+m] \to X$ by   
\[ 
H(x,i):= 
\begin{cases} 
 F(x, i) & 0 \leq i <n\\ 
G(x, i-n) & n \leq i \leq n+m\\ 
\end{cases}.
\] 
Obviously $H(x, 0) = f$ and $H(x, n+m) = h$. it remains to show that $H(x, -)$, for $x \in X$, is $r$-Lipschitz and $H(-,i)$, for $i \in [n+m]$, is $s$-Lipschitz. First we show that $H(x, -)$ is an $r$-Lipschitz function for $x \in X$.
For the cases $i, i' <n$ and $i, i' > n$, the inequality $d_Y (H(x, i), H(x, i')) \leq rd(i, i')$ holds, because $F(x, -)$ and $G(x, -)$ are $r$-Lipschitz functions.
If $i<n, i' > n$, since $d_Y$ is a metric satisfying the triangle inequality and $F(x,n) = g = G(x, 0)$, then
\begin{align*}
d_{Y} (H(x, i), H(x, i')) & = d_Y (F(x,i), G(x, i'-n))\\
 & \leq d_{Y}(F(x,i), F(x,n))+ d_{Y}(F(x,n), G(x, i'-n) )\\
 &=  d_{Y}(F(x,i), F(x,n))+ d_{Y}(G(x, 0), G(x, i'-n))\\
 & \leq r d(i,n)+r d(0,i'-n) \\
 &=r(n-i)+ r( i'-n)\\
&  =r (i'-i)= r d(i, i').
  \end{align*}
Moreover, $H(-, i)$ is an $s$-Lipschitz function, because it is obtained by gluing two functions $F(-,i)$ and $G(-,i$-$n)$ being $s$-Lipschitz due to the fact that $F$ and $G$ are $(s,r)$-homotopies. Therefore $H$ is an $(s,r)$-homotopy between $f$ and $h$.
\end{proof} 

The homotopic distance between two maps counts the minimum number of rules needed to make them locally homotopic; That is the smallest integer $k$ for which an open cover $U_0, \dots, U_k$ of the space $X$ exists such that the restricted maps of $f$ and $g$ to each $U_i$ are homotopic; $f|_{U_i} \simeq g|_{U_i}$ for $i =0 ,\ldots, k$ \cite{Mac'ias}. Now we present discrete homotopic distance to measure how far two maps are from each other, in terms of discrete homotopy.

\begin{definition}\label{dD.1} 
Let $X$ and $Y$ be two metric spaces and $f, g: X \to Y$ be two $s$-Lipschitz maps. The discrete homotopic distance between $f$ and $g$, denoted by $D_r(f, g)$, is the smallest integer $k$ for which there exists an open cover $U_0, \dots, U_k$ of $X$ such that the restrictions $f|_{U_i}$ and $g|_{U_i}$ are $(s, r)$-homotopic; $f|_{U_i} \simeq_{(s,r)} g|_{U_i}$ for all $i=0, 1, \dots, k$. If such a covering does not exist, we put $D_r(f, g):=\infty$.
\end{definition}

Note that in Definition \ref{dD.1}, the time required to deform from $f|_{U_i}$ to $g|_{U_i}$ depends on $U_i$ and varies by $i$. But one can equalize the end time if the homotopy map is extended axiomatically by repeating  the last component. 
More precisely, let $D_r(f, g)=k$. Then for $0 \leq i \leq k$, there exists map $\bar{H}_i: U_i \times [m_i] \to Y$ such that $\bar{H}_i: f|_{U_i} \simeq_{(s,r)} g|_{U_i}$. Put $m=\max \{ m_i\}_{i=0} ^ {i=k}$. We can extend $\bar{H}_i$ to $H_i: U_i \times [m] \to Y$ by $H_i(b, n)=g(b)$ for $b \in U_i$ and $m_i \leq n \leq m$.
Some basic properties of $D_r$ hold by Definition \ref{de2.1} and \ref{dD.1} as follows.
\begin{remark} \label{pD.1}
Let $X$ and $Y$ be two metric spaces and $f, g, f', g' : X \longrightarrow Y$ be $s$-Lipschitz maps. 
\begin{itemize}
\item [1.]
$D_r(f, g) =0$ if and only if $f$ is $(s, r)$-homotopic to $g$.
\item [2.]
 $D_r(f, g)=D_r(g,f)$.
\item [3.]
If $f\simeq_{(s,r)} f'$ and $g\simeq_{(s,r)} g'$, then $D_r(f, g)=D_r(f', g')$. 
\end{itemize}
\end{remark}

In the next example, the rules of two maps are expressed, for which the homotopic distance $D$ and the discrete homotopic distance $D_r$ admit different values. In fact there are two maps $f$ and $g$ such that $D_r(f,g)=0$ for every positive real number $r$, but $D(f, g) > 0$. This proves that neither the quantity of $D$ is a generalization of $D_r$ and nor vice versa.

\begin{example}\label{ex 3.3} 
Let $n$ and $k$ be positive integers. Consider two maps $f, g: S^1 \to \mathbb{C}^{\ast}= \mathbb{C} \setminus {\overline{0}}$ defined by $f(z)= z^{n}$ and $g(z)= z^{k}$ respectively. 
First we show that $f$ is an $n$-Lipschitz map and by a similar argument $g$ is a $k$-Lipschitz map. For every $z_1, z_2 \in S^1$
\begin{align*} 
d  (f(z_1), f(z_2)) & =d_{S^1}( z_1^n, z_2^n) = |z_1^n - z_2^n| \\
& =\lvert (z_1 - z_2)(z_1^{n-1}+z_1^{n-2}z_2 + z_1^{n-3}z_2^{2}+ \dots, z_2^{n-1}) \rvert\\ 
& \leq |z_1 - z_2|(|z_1^{n-1}|+|z_1^{n-2}z_2|+|z_1^{n-3}z_2^{2}|+ \dots, |z_2^{n-1}| ) \\ 
& = |z_2-z_1|(|z_1|^{n-1}+|z_1||z_2|^{n-2}+|z_1|^{n-3}|z_2|^{2}+ \dots, |z_2|^{n-1} )\\
& =n|z_2-z_1|. 
\end{align*}
Let $r>0$ be an arbitrary real number. Put $m=2[\frac{2 }{r}]+1$. We define $F: S^1 \times [m] \to \mathbb{C}^{\ast}$ by $F(z,j)=\frac{m-j}{m} z^n+ \frac{j}{m} z^k$ for $j \in [m]$ and $z \in S^1$. Note that $F(z,0)=f(z)= z^{n}$ and $F(z,m)=g(z)= z^{k} $. First, we investigate that $F$ maps $S^1 \times [m]$ into the space $\mathbb{C}^*$. If $F(z, j)=0$, we have $\frac{m-j}{m} z^n+ \frac{j}{m} z^k=0$. Then $|\frac{m-j}{j}||z^{n-k}| =1$. Therefore $m=2j$, an even number while $m$ is an odd number and this is a contradiction.
The following sequence of inequalities shows that $F(-, j)$, $j \in [m]$, is an $s$-Lipschitz function for $s= \max \{k, n \}$. 
 For every  $z_1, z_2 \in S^1$ 
\begin{align*} 
d  (F(z_1, j), F(z_2, j)) 
& =d_{S^1} (\frac{m-j}{m} z_2^n+ \frac{j}{m} z_2^k, \frac{m-j}{m} z_1^n+ \frac{j}{m} z_1^k) \\ 
&=| \frac{m-j}{m} (z_2^n-z_1^n)+ \frac{j}{m} (z_2^k -z_1^k)| \\ 
&= \bigg| \frac{m-j}{m} (z_2-z_1)(z_2^{n-1}+z_2^{n-2} z_1+ \dots + z_1^{n-1}) \\
& \quad  +\frac{j}{m} (z_2-z_1)(z_2^{k-1}+z_2^{k-2} z_1  + \dots + z_1^{k-1})\bigg|\\
&=\bigg|\frac{m-j}{m}(z_2^{n-1}+z_2^{n-2} z_1+ \dots + z_1^{n-1})
\\
& \quad +\frac{j}{m}(z_2^{k-1}+z_2^{k-2} z_1 
+ \dots + z_1^{k-1}) \bigg| | z_2-z_1| \\
&\leq  \frac{m-j}{m} \big(|z_2|^{n-1}+|z_2|^{n-2}| z_1|+ \dots + |z_1|^{n-1}\big) |z_2 -z_1| \\ 
& \quad +\frac{j}{m} \big(|z_2|^{k-1}+|z_2|^{k-2}| z_1|+ \dots + |z_1|^{k-1}\big)| z_2-z_1|\\
& \leq | \frac{m-j}{m}(n )+\frac{j}{m}(k) | | z_2-z_1| \\
&\leq \max \lbrace n ,k \rbrace |z_2-z_1|\\
&=sd(z_2 , z_1). 
\end{align*} 

Here we show that $F(z,-)$ is an $r$-Lipschitz function, making $f$ and $g$ $(s,r)$-homotopic. For $0 \le j_1, j_2 \le m$
\begin{align*} 
d  (F(z, j_2), F(z, j_1)) 
&=d (\frac{m-j_2}{m} z^n+ \frac{j_2}{m}z^k , \frac{m-j_1}{m} z^n+ \frac{j_1}{m}z^k) \\ 
&=\left| - \frac{j_2-j_1}{m}z^n+ \frac{j_2-j_1}{m}z^k \right|\\
&= \frac{|j_2-j_1| }{m}|z^k-z^n| \\ 
& \leq \frac{|j_2-j_1|}{m} (|z^k|+|z^n|) \\
& = \frac{2}{m} |j_2 - j_1|
\\
& \leq r d(j_2 , j_1). 
\end{align*} 
Therefore $D_r(f, g)=0$, but we show that $D(f, g) > 0$. If $D(f, g)=0$, then there is a map $G: S^1 \times I \to \mathbb{C}^{\ast}$ such that $G: f \simeq g$. Define $f_1, g_1: S^1 \to S^1$ by the rule $f_1(z)=\frac{f(z)}{\parallel f(z) \parallel}$ and $g_1(z)=\frac{g(z)}{\parallel g(z) \parallel}$ and also define $H: S^1 \times I \to S^1$ by $H(z,t)=\frac{G(z,t)}{\parallel G(z,t) \parallel}$. Hence $D(f_1, g_1) = 0$. But this is a contradiction, because  $D(f_1, g_1) \neq 0$ by \cite[Theorem 2.1]{Vergilia}.
\end{example}

In Figure \ref{f1}, there are two holes in the path from f to g, and then $D(f,g)=2$. But the discrete homotopic distance changes with the value of $r$. Let the diameter of holes $h_0$ and $h_1$ equal $d_0$ and $d_1$ respectively and $d_0 < d_1$. Then the discrete homotopic distance $D_r(f, g)$ varies between $0$ and $2$ depending on the step length $r$ and satisfies the following
\[
D_r(f,g) = \begin{cases}
0\ \ \mathrm{if}\  & r > d_1 \\
1\  \ \mathrm{if}\   & d_0 <r \le d_1 \\
2\ \ \mathrm{if}\  & r \le d_0
\end{cases}.
\]

\begin{figure}[!ht]
\centering
\includegraphics[scale=0.5]{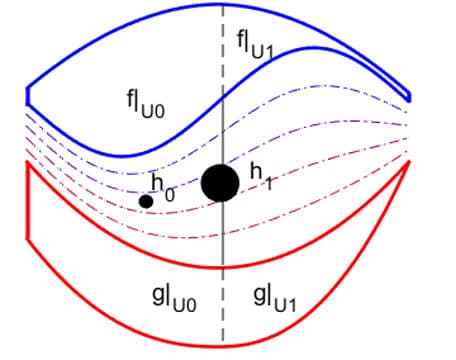} 
\caption{$D_r(f,g)$ in a space including holes.}\label{f1}
\end{figure}

In other words, by  longer steps, more holes can be passed as well as shown in Proposition \ref{pD.0}. In fact $D_r$ inversely depends on $r$, or equivalently the discrete homotopic distance decreases whenever the scale $r$ increases.

\begin{proposition} \label{pD.0}
Let $s, r_1, r_2$ be positive real numbers and  $f, g: X \to Y$ be two $s$-Lipschitz maps. If $r_1 \leq r_2$, then $D_{r_2}(f, g) \leq D_{r_1}(f, g)$. 
\end{proposition}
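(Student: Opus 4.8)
The plan is to exploit the fact that the discrete homotopic distance is defined through an open cover, and to show that any cover witnessing the value $D_{r_1}(f,g)$ automatically witnesses the (smaller or equal) value $D_{r_2}(f,g)$. The heart of the argument is a single monotonicity observation about the Lipschitz parameter: if a map into $Y$ is $r_1$-Lipschitz and $r_1 \leq r_2$, then it is also $r_2$-Lipschitz, since $d_Y(a,b) \leq r_1\, d(\cdot,\cdot) \leq r_2\, d(\cdot,\cdot)$. Everything reduces to inserting this remark into Definition \ref{de2.1}.

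First I would dispose of the trivial case $D_{r_1}(f,g) = \infty$, for which the inequality $D_{r_2}(f,g) \leq \infty$ holds automatically. Otherwise I set $k = D_{r_1}(f,g)$, so that by definition there is an open cover $U_0, \ldots, U_k$ of $X$ together with $(s, r_1)$-homotopies $F_i \colon U_i \times [m_i] \to Y$ satisfying $F_i \colon f|_{U_i} \simeq_{(s,r_1)} g|_{U_i}$ for each $i = 0, \ldots, k$.

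The key step is to verify that each $F_i$ is simultaneously an $(s, r_2)$-homotopy. The horizontal slices $F_i(-, j)$ are $s$-Lipschitz independently of the value of $r$, so condition $2$ of Definition \ref{de2.1} is untouched, as are the boundary identities in condition $3$. For condition $1$, each vertical slice $F_i(x, -)$ is $r_1$-Lipschitz by hypothesis, hence $r_2$-Lipschitz by the monotonicity observation above. Therefore $F_i \colon f|_{U_i} \simeq_{(s,r_2)} g|_{U_i}$ for every $i$, and the very same cover $U_0, \ldots, U_k$ now witnesses that $f|_{U_i}$ and $g|_{U_i}$ are $(s, r_2)$-homotopic for all $i$. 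By Definition \ref{dD.1} this yields $D_{r_2}(f,g) \leq k = D_{r_1}(f,g)$.

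I expect no genuine obstacle in this proof; the only point demanding a moment's care is to confirm that increasing $r$ preserves the vertical Lipschitz bound while leaving the horizontal $s$-Lipschitz condition—which does not involve $r$ at all—intact, so that no homotopy has to be rebuilt and the cardinality of the cover is unchanged.
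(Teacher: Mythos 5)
Your proposal is correct and follows essentially the same argument as the paper: take the cover witnessing $D_{r_1}(f,g)=k$, observe that each vertical slice $F_i(x,-)$ being $r_1$-Lipschitz is automatically $r_2$-Lipschitz since $r_1 \leq r_2$, and conclude the same cover witnesses $D_{r_2}(f,g) \leq k$. Your explicit handling of the $\infty$ case and of the unaffected horizontal/boundary conditions is slightly more careful than the paper's write-up, but the substance is identical.
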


\begin{proof} 
Let $D_{r_1}(f, g)=k$. Then there exists an open cover $U_0, \ldots, U_k$ and for each $i=0, \ldots,k$, an  $(s, r_1)$-homotopy $F_i: U_i \times [m] \to Y$ such that $F_i: f|_{U_i} \simeq_{(s,r_1)} g|_{U_i}$. 
Since $F_i$ is an $(s,r_1)$-homotopy, $F_i(x, -)$ is $r_1$-Lipschitz. Thus $d_{Y} (F_i(x, j_2), F_i(x, j_1)) \leq r_1 d(j_2, j_1)$ for $0 \leq j_1, j_2 \leq m$.
Moreover $r_1 \le r_2$, and hence 
$d_{Y} (F_i(x, j_2), F_i(x, j_1)) \leq r_2 d(j_2, j_1)$ for $0 \leq j_1, j_2 \leq m$.
Therefore  $F_i(x, -)$ is $r_2$-Lipschitz, and then
 $F_i: f|_{U_i} \simeq_{(s,r_2)} g|_{U_i}$. This implies that $D_{r_2}(f, g) \leq k=D_{r_1}(f, g)$.
\end{proof}

\section{Discrete categories and relation to discrete distance}

In this section, we discuss the relation between discrete homotopic distance and some categories of map. It is well-known that the Lusternik-Schnirelmann category  $cat$ counts how far a space is from contractiblity. In the literature of homotopic distance, $cat (X) = D(id_X, c)$. Also for topological complexity, $TC(X) = D(p_1,p_2)$ where $p_1, p_2: X \times X \to X$ are projection maps. Similar statements corresponded to discrete versions   hold which are surveyed below.  
First we recall  the concept of $r$-null-homotopy, providing a discrete interpretation of null-homotopy. 
\begin{definition}[\cite{hasanzadeh}]\label{dD.2}
Let $f: X \to Y$ be an $s$-Lipschitz map. The map $f$ is called $r$-null-homotopic if there exists an $(s, r)$-homotopy map $F: X\times[m] \to Y$ between $f: X \to Y$ and some constant map. Specifically, the identity map on $X$ is termed $r$-null-homotopic, if there is a $(1, r)$-homotopy map between the identity map and some constant map. In this case, the space $X$ is called $r$-contractible.
\end{definition}

To define Lusternik-Schnirelmann category in our point of view, we rewrite the initial definition with discrete tools.
If $X$ is path-connected, then the Lusternik-Schnirelmann category of $X$, denoted by $cat(X)$, is defined as the smallest non-negative integer $k$ for which there exists an open covering $U_0, \dots, U_k$ for the space $X$ such that each ${U_i}$ is contractible in $X$, for $i = 0, 1, \dots, k$. If such a cover does not exist, then $cat(X):=\infty $. Also the Lusternik-Schnirelmann category of map $f: X \to Y$, denoted by $cat(f)$, is the smallest non-negative integer $k$ such that there exists an open cover $U_0, \dots, U_k$ for the space $X$ such that $f|_{U_i}$ is homotopic to some constant map. If such a cover does not exist, then $cat(f):=\infty $. In particular, $cat(id_X)=cat(X)$ if $id_X$ denotes the identity map \cite{Pavesiˇc}.
Additionally, we recall that the discrete Lusternik-Schnirelmann category of $X$, denoted by $cat_r(X)$, is the smallest non-negative integer $k$ for which there exists an open cover $U_0, \dots, U_k$ of $X$ such that each ${U_i}$ is $r$-contractible in $X$ for $i = 0, 1, \dots, k$. If such a cover does not exist, then $cat_r(X):=\infty$ \cite{hasanzadeh}. 
A subset $U$ is called $r$-contrcatible in $X$ if there exists $(1,r)$-homotopy $H:U \times [m] \to X$ such that $H: id_U \simeq_{(1,r)} c_x$, where $c_x$ denotes the constant map on $x$ for some $x \in U$.

\begin{definition}\label{dD.3}
The discrete Lusternik-Schnirelmann category of map $f: X \to Y$, denoted by $cat_r(f)$, is the smallest non-negative integer $k$ such that there exists an open cover $U_0, \dots, U_k$ for the space $X$ such that $f|_{U_i}$ is $(s,r)$-homotopic to some constant map. If such a cover does not exist, then $cat_r(f):=\infty $. In particular if $id_X$ denotes the identity map, then $cat_r(id_X)=cat_r(X)$.
\end{definition} 
Let $X$ be a metric space. A map $\gamma: [m] \to X$ is called an $r$-path if $ d_X(\gamma (i), \gamma (i + 1)) \leq r$ for every $i \in [m-1]$. If for each $x, y \in X$, there exists an $r$-path from $x$ to $y$, then X is said to be $r$-connected \cite{Barcelo}. We know that each constant map is $s$-Lipschitz, for each $s \ge 0$. Thus by any $r$-path between two points $x$ and $y$, one can define an $(s,r)$-homotopy between constant maps $c_x$ and $c_y$. This fact 
is important to prove Proposition \ref{pD.2}, which  presents some natural relations between $cat_r$ and $D_r$.
In fact $cat_r$ is the distance between the given map and constant map.
 
\begin{proposition}\label{pD.2}
Let $X$ be an $r$-connected space and $c_x$ denote the constant map on $x \in X$. \begin{itemize}
\item [1.] 
Define $i_1, i_2: X \to X \times X$ by $i_1(x)=(x,x_0)$ and $i_2(x)=(x_0, x)$ for some fixed point $x_0 \in X$. Then
\begin{equation}\label{eq3.3}
cat_r(X)=D_r(id_X, c_{x_0}) = 
D_r(i_1,c_{(x_0,x_0)})= D_r(i_2,c_{(x_0,x_0)})
=D_r(i_1, i_2).
\end{equation}
\item [2.]
 Let $f: X \to Y$ be an $s$-Lipschitz map. Then
$cat_r(f)= D_r(f, c_x)$.
\end{itemize}
\end{proposition}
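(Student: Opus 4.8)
The whole proposition rests on a single mechanism: a constant map restricted to any piece of an open cover is again that same constant map, while conversely any two constant maps into an $r$-connected metric space are $(s,r)$-homotopic. The second fact is precisely the observation made just before the statement --- an $r$-path $\gamma$ from $y$ to $x$ gives the $(s,r)$-homotopy $(b,j)\mapsto\gamma(j)$ between $c_y$ and $c_x$, since every horizontal slice is constant (hence $s$-Lipschitz) and every vertical slice is $\gamma$ (hence $r$-Lipschitz). Together with the transitivity of $\simeq_{(s,r)}$ established in Proposition \ref{pr2.2}, this is what lets one replace the unspecified target constant appearing in the definition of $cat_r$ by a single fixed constant.

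I would prove Part 2 first, since the first equality of Part 1 is its special case. For $cat_r(f)\le D_r(f,c_x)$ there is nothing to do: a cover realizing $D_r(f,c_x)=k$ has each $f|_{U_i}\simeq_{(s,r)}c_x|_{U_i}$ with $c_x|_{U_i}$ a constant map, so the same cover witnesses $cat_r(f)\le k$. For the reverse inequality, start from a cover $U_0,\dots,U_k$ realizing $cat_r(f)=k$, so $f|_{U_i}\simeq_{(s,r)}c_{y_i}$ for certain constants $c_{y_i}$; joining $y_i$ to $x$ by an $r$-path gives $c_{y_i}\simeq_{(s,r)}c_x|_{U_i}$, and Proposition \ref{pr2.2} then yields $f|_{U_i}\simeq_{(s,r)}c_x|_{U_i}$, so $D_r(f,c_x)\le k$.

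For Part 1, the equality $cat_r(X)=D_r(id_X,c_{x_0})$ follows by applying Part 2 to the $1$-Lipschitz map $f=id_X$, together with $cat_r(id_X)=cat_r(X)$ from Definition \ref{dD.3}. The remaining three equalities are obtained by pushing $(1,r)$-homotopies through the coordinate inclusions $i_1,i_2$ and the projections $p_1,p_2 : X\times X\to X$, all of which are $1$-Lipschitz for the $\ell^1$-metric. For $D_r(i_1,c_{(x_0,x_0)})=cat_r(X)$: given an $r$-contraction $H : U_i\times[m]\to X$ of $U_i$ onto $x_0$ (the target can be taken to be $x_0$ by the $r$-path argument above), the map $(b,j)\mapsto(H(b,j),x_0)$ is a $(1,r)$-homotopy from $i_1|_{U_i}$ to $c_{(x_0,x_0)}|_{U_i}$; conversely, post-composing any such homotopy with $p_1$ returns an $r$-contraction of $U_i$. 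The same argument in the second coordinate gives the $i_2$ equality. Finally, for $D_r(i_1,i_2)=cat_r(X)$, an $r$-contraction of $U_i$ onto $x_0$ produces $i_1|_{U_i}\simeq_{(1,r)}c_{(x_0,x_0)}|_{U_i}\simeq_{(1,r)}i_2|_{U_i}$, hence $i_1|_{U_i}\simeq_{(1,r)}i_2|_{U_i}$ by Proposition \ref{pr2.2}; conversely, post-composing a homotopy $i_1|_{U_i}\simeq_{(1,r)}i_2|_{U_i}$ with $p_1$ produces a $(1,r)$-homotopy from $id_{U_i}$ to $c_{x_0}$, i.e. an $r$-contraction of $U_i$.

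\textbf{Main care points.} Two things must be checked. The routine one is that composing an $(s,r)$-homotopy with a $1$-Lipschitz map preserves both the horizontal $s$-Lipschitz and the vertical $r$-Lipschitz conditions; this is what legitimizes all the inclusion and projection transfers, and is exactly why the $\ell^1$-metric (under which $i_1,i_2,p_1,p_2$ are all $1$-Lipschitz) is the right choice. The more substantive point --- and the true heart of the argument --- is the replacement of the ``some constant map'' of the $cat_r$ definition by a single fixed constant, which is where $r$-connectedness is indispensable. For Part 1 this is free, since $X$ is $r$-connected by hypothesis and therefore so is $X\times X$ in the $\ell^1$-metric; for Part 2 one should state explicitly that the constants $c_{y_i}$ and $c_x$ are joined by an $r$-path in the target, as this is the only place the hypothesis enters.
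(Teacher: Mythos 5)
Your argument is correct and runs on the same engine as the paper's proof: composing $(1,r)$- or $(s,r)$-homotopies with the $1$-Lipschitz maps $p_1,p_2,i_1,i_2$ (Lemma \ref{pC.0}), transitivity (Proposition \ref{pr2.2}), and the fact that an $r$-path between two points yields an $(s,r)$-homotopy between the corresponding constant maps. The packaging differs, though. The paper handles Part 1 by a single cyclic chain of inequalities,
\[
cat_r(X)\le D_r(id_X,c_{x_0})\le D_r(i_1,c_{(x_0,x_0)})\le D_r(i_2,c_{(x_0,x_0)})\le D_r(i_1,i_2)\le cat_r(X),
\]
each link obtained by composing with one auxiliary map ($p_1$, then $i_1\circ p_2$, then $i_2\circ p_2$, and for the closing link $i_1$ and $i_2$ plus transitivity), which costs only five one-directional arguments; it then proves Part 2 afterwards. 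You instead prove Part 2 first and show each term of \eqref{eq3.3} equal to $cat_r(X)$ by separate two-sided arguments; this is somewhat more redundant (roughly eight implications), but each equality becomes self-contained, and the first equality of Part 1 falls out of Part 2 applied to $f=id_X$. One place where your write-up is actually sharper: in Part 2 you state explicitly that the constants $c_{y_i}$ must be joined to the fixed constant by an $r$-path \emph{in the codomain}. The paper's own proof appeals only to $r$-connectedness of $X$, which for an $s$-Lipschitz map $f$ produces $sr$-paths, not $r$-paths, between image points; so the subtlety you flag is an imprecision already present in the paper's argument (harmless when $s\le 1$ or when the target is $r$-connected), not a new gap introduced by your proof.
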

 
\begin{proof}
\begin{itemize} 
\item [1.]  
To prove equalities of \eqref{eq3.3}, first we show that each term is less than or equal to the next one
\[
cat_r(X) \le D_r(id_X, c_{x_0}) \le
D_r(i_1,c_{(x_0,x_0)}) \le  D_r(i_2,c_{(x_0,x_0)})
\le D_r(i_1, i_2).
\]
Then we verify that
$D_r(i_1, i_2) \le cat_r(X)$, 
which makes the  sequence of equalities \eqref{eq3.3} to be maintained.
Let $D_r(\text{id}_X, c_{x_0}) = k$. Then there exists an open cover $\{U_0, \ldots, U_k\}$ of $X$ such that the restriction of the identity map is $(1,r)$-homotopic to $c_{x_0}$; That is $id|_{U_i} \simeq_{(1,r)} c_{x_0}$. Therefore each $U_i$ is $r$-contractible in $X$.   Since $cat_r(X)$ 
 is the minimum cardinality minus $1$, of open covers of $X$ with $r$-contractible elements, $cat_r(X) \le k$.

Now, assume that $D_r(i_1,c_{(x_0,x_0)}) = k$. This means there exists an open cover $\{U_0, \dots, U_k\}$ of $X$ such that $i_1|_{U_i} \simeq_{(1, r)} c_{(x_0, x_0)}|_{U_i}$ for each $i = 0, 1, \dots, k$. Let $p_1: X \times X \to X$ be the projection map onto the first coordinate. For each $x \in U_i$, we have 
$p_1 \circ i_1(x) = p_1(x, x_0) = x = id_X(x)$, and $p_1 \circ c_{(x_0,x_0)} =c_{x_0}$. Since projection  map $p_1$ is $1$-Lipschitz, by  Lemma \ref{pC.0},
\[
id_X = p_1 \circ i_1 \simeq_{(1,r)} p_1 \circ c_{(x_0,x_0)} = c_{x_0}.
\]
Therefore $D_r(id_X, c_{x_0}) \le
D_r(i_1,c_{(x_0,x_0)})$.

Let $D_r(i_2, c_{(x_0,x_0)}) = k$. 
This means there exists an open cover $\{U_0, \dots, U_k\}$ of $X$ such that $i_2|_{U_i} \simeq_{(1, r)} c_{(x_0,x_0)}|_{U_i}$ for each $i = 0, 1, \dots, k$. Let $p_2: X \times X \to X$ be the projection map onto the second coordinate. For each $x \in U_i$, we have 
\[i_1 \circ p_2 \circ i_2(x) = i_1 \circ p_2(x_0, x) = i_1(x),\]
 and $i_1 \circ p_2 \circ c_{(x_0,x_0)} =i_1 \circ c_{x_0} = c_{(x_0,x_0)}$. Since $i_1$ and the projection  map $p_1$ are $1$-Lipschitz, by  Lemma \ref{pC.0},
\[
i_1|_{U_i} = i_1 \circ p_2 \circ i_2|_{U_i} \simeq_{(1,r)} i_1 \circ p_2 \circ c_{(x_0,x_0)}|_{U_i} = c_{(x_0, x_0)}|_{U_i}.
\]
Hence $D_r(i_1, c_{(x_0, x_0)}) \le D_r(i_2, c_{(x_0, x_0)})$.

Now, assume that $D_r(i_1, i_2) = k$. This means there exists an open cover $\{U_0, \dots, U_k\}$ of $X$ such that $i_1|_{U_i} \simeq_{(1, r)} i_2|_{U_i}$ for each $i = 0, 1, \dots, k$. If  we compose both sides of the homotopy relation with $i_2 \circ p_2$, for $ =0, \ldots,k$, we have 
\begin{align*}
i_2 \circ p_2 \circ i_1 (x) &= i_2 \circ p_2 (x, x_0) = i_2(x_0) = (x_0,x_0) = c_{(x_0,x_0)} (x), \textrm{ and }\\
i_2 \circ p_2 \circ i_2 (x) &= i_2 \circ p_2 (x_0, x) = i_2(x).
\end{align*}
Therefore
\[
c_{(x_0,x_0)}|_{U_i} = i_2 \circ p_2 \circ i_1|_{U_i} \simeq_{(1,r)} i_2 \circ p_2 \circ i_2|_{U_i} = i_2|_{U_i}.
\]
Hence $D_r( i_2, c_{(x_0,x_0)}) \le k$.

Finally assume that $cat_r(X)=k$. 
Then there exists a cover $\{U_0, \ldots, U_k\}$ of $X$ consisting of $r$-contractible open sets; That is $id|_{U_i} \simeq_{(1,r)} c_{x}$ for some $x \in U_i$. Since $X$ is $r$-connected, $x$ can be any point in $X$, namely $x_0$. Then since $i_1$ and $i_2$ are $1$-Lipschitz, by Lemma \ref{pC.0}, 
\begin{align*}
i_1|_{U_i} & = i_1 \circ id|_{U_i} \simeq_{(1,r)} i_1 \circ c_{x_0} = c_{(x_0,x_0)}
\\
i_2|_{U_i} &= i_2 \circ id|_{U_i} \simeq_{(1,r)}     i_2 \circ c_{x_0} = c_{(x_0, x_0)}.
\end{align*}
Proposition \ref{pr2.2} implies that $i_1|_{U_i} \simeq_{(1,r)} i_2|_{U_i}$. Since $D_r(i_1, i_2)$ is the minimum cardinality minus $1$ of such open covers of $X$, we conclude $D_r(i_1, i_2) \leq \text{cat}_r(X)$.

\item
The distance $D_r(f, c)$ is the minimum cardinality  minus $1$ of open covers of $X$ such that $f$ is $(s, r)$-homotopic to $c$ on each element of cover. Moreover,  $\text{cat}_r(f)$ is the minimum cardinality  minus $1$ of covers $\{U_0, \ldots,U_k\}$ of $X$ such that $f|_{U_i}$ is $(s,r)$-homotopic to some constant map in $U_i$ for $i =0, \ldots,k$. We show that since $X$ is $r$-connected, these two conditions are equivalent.
 In an open set $U$, map  $f$ is $r$-contractible if and only if $f|_U \simeq_{(s,r)} c_x$ for some $x \in U$ if and only if  $f|_{U}$ is $(s, r)$-homotopic to $c$, because $X$ is $r$-connected and all constant maps are $(s,r)$-homotopic for each real positive number $s$. Thus, we have $D_r(f, c) = \text{cat}_r(f)$. 
\end{itemize} 
\end{proof}

We recall that the topological complexity of path connected space $X$ is the smallest number $k$ such that $X \times X = U_0 \cup U_1 \cup \dots \cup U_k$ in which for each $i \in \{ 0, 1, \dots, k \}$ there exists a motion planning $s_i: U_i \to X^I$ such that $\pi \circ s_i = \text{id}_{U_i}$. If no such $k$ exists, then $TC(X)=\infty$ \cite{Farber}. 
The discrete topological complexity of discrete motion is defined and studied in \cite{hasanzadeh}. 
Let $X$ be a metric space and $r$ be a positive real number. Then the discrete topological complexity of $X$, denoted by $TC_r(X)$, is the smallest number $k$ such that $X \times X = U_0 \cup U_1 \cup \dots \cup U_k$ and  for each $i \in \{ 0, 1, \dots, k \}$, there is $1$-Lipschitz map $S_i: U_i \to X^{[m]}$ such that $\pi \circ S_i = \text{id}_{U_i}$. If no such $k$ exists, then $TC_r(X)=\infty$ \cite{hasanzadeh}. Note that the space $X^{[m]}$ denotes the set
of $r$-paths $\gamma : [m] \to X$ equipped with the uniform metric. Proposition \ref{cD.1} represents the discrete topological complexity as the distance between two projection maps. A similar statement was previously proven in \cite{Mac'ias}:
If $X$ is a topological space, and $p_1, p_2: X \times X \to X$ are projection maps defined by $p_1(x_1,x_2)=x_1$ and $p_2(x_1,x_2)=x_2$, then $TC(X)=D(p_1, p_2)$.

\begin{proposition}\label{cD.1}
Let  $X$ be a metric space and $p_1, p_2: X \times X \to X$ be projection maps. Then $TC_r(X)=D_r(p_1, p_2)$. 
\end{proposition}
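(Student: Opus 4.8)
The plan is to exhibit a bijective correspondence between the data defining $TC_r(X)$ and the data defining $D_r(p_1,p_2)$ over one and the same open set, so that both quantities are computed by open covers of $X\times X$ of the same cardinality. The correspondence is the discrete analogue of the exponential adjunction between maps $U\times[m]\to X$ and maps $U\to X^{[m]}$. Concretely, for a subset $U\subseteq X\times X$ I would match a $1$-Lipschitz section $S\colon U\to X^{[m]}$ satisfying $\pi\circ S=\mathrm{id}_U$ (where $\pi(\gamma)=(\gamma(0),\gamma(m))$ evaluates the endpoints) with a map $H\colon U\times[m]\to X$ via $H((a,b),j):=S(a,b)(j)$, equivalently $S(a,b):=H((a,b),-)$. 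Since the cover $\{U_0,\dots,U_k\}$ is left untouched by this operation and both invariants are defined as the least $k$ admitting such a cover, proving that the two kinds of data over each $U_i$ are equivalent yields $TC_r(X)=D_r(p_1,p_2)$ at once, with no separate argument for the two inequalities. First I would note that the projections $p_1,p_2$ are $1$-Lipschitz for the product metric on $X\times X$, so that $D_r(p_1,p_2)$ is computed with $s=1$.

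Next I would verify the endpoint and time conditions. For the correspondence above, $H((a,b),0)=S(a,b)(0)=a=p_1(a,b)$ and $H((a,b),m)=S(a,b)(m)=b=p_2(a,b)$, so $\pi\circ S=\mathrm{id}_U$ is exactly the statement that $H$ restricts to $p_1$ at time $0$ and to $p_2$ at time $m$, i.e.\ condition~(3) of Definition \ref{de2.1}. For the vertical (time) direction, I would observe that $S(a,b)$ being an $r$-path, namely $d_X(\gamma(j),\gamma(j+1))\le r$ for consecutive $j$, is equivalent by the triangle inequality to $H((a,b),-)$ being an $r$-Lipschitz map on $[m]$, which is precisely condition~(1) of Definition \ref{de2.1}.

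The part I expect to carry the real content is matching condition~(2), the $s$-Lipschitz behaviour of the horizontal slices $H(-,j)$, with the $1$-Lipschitz property of the section $S$ for the \emph{uniform} metric $d_\infty(\gamma,\delta)=\max_{j\in[m]}d_X(\gamma(j),\delta(j))$ on $X^{[m]}$. Here I would compute, for $(a,b),(a',b')\in U$,
\[
d_\infty\big(S(a,b),S(a',b')\big)=\max_{j\in[m]}d_X\big(H((a,b),j),H((a',b'),j)\big),
\]
and then unwind the two one-sided implications: if $S$ is $1$-Lipschitz the maximum over $j$ is bounded by $d_{X\times X}((a,b),(a',b'))$, hence so is each individual slice, so every $H(-,j)$ is $1$-Lipschitz; conversely, if each $H(-,j)$ is $1$-Lipschitz the same bound holds uniformly in $j$ and therefore for the maximum, giving that $S$ is $1$-Lipschitz. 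This interchange of the ``$\max_j$'' with the Lipschitz estimate is the crux, and it is exactly what forces the step length $r$ to appear only in the vertical direction and the coefficient $s=1$ in the horizontal one.

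Finally, I would reconcile the bookkeeping of the integer $m$. The $D_r$-homotopies furnished on different $U_i$ may a priori use different lengths $m_i$, whereas the sections $S_i$ are required to land in a single $X^{[m]}$; I would invoke the remark following Definition \ref{dD.1}, extending each homotopy to the common length $m=\max_i m_i$ by repeating its last component, which keeps the time slices $r$-Lipschitz and the space slices $1$-Lipschitz, and correspondingly pads each $r$-path by its endpoint. With this normalization the correspondence $S_i\leftrightarrow H_i$ is a genuine bijection on each element of the cover, and the equality $TC_r(X)=D_r(p_1,p_2)$ follows.
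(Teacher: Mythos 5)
Your proposal is correct and follows essentially the same route as the paper: the paper's proof also uses the currying correspondence $H((x,y),j)=S(x,y)(j)$ between $(1,r)$-homotopies $U_i\times[m]\to X$ and $1$-Lipschitz sections $U_i\to X^{[m]}$, matching endpoints with $\pi\circ S_i=\mathrm{id}$, the $r$-path condition with $r$-Lipschitzness in time, and the uniform metric on $X^{[m]}$ with the $1$-Lipschitzness of the slices $H(-,j)$. Your only departures are cosmetic: you package the two inequalities as a single bijection over each cover element, and you handle the normalization of the lengths $m_i$ by padding (which the paper delegates to the remark after Definition \ref{dD.1}).
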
 

\begin{proof} 
Assume that $TC_r(X) = k$. Then there exists an open cover $\{U_0, U_1, \dots, U_k\}$ of $X \times X$ such that for each $U_i$, there is a   $1$-Lipschitz map $S_i: U_i \to X^{[m]}$ satisfying $\pi \circ S_i = \text{id}_{U_i}$. For $i=0, \ldots,k$, define $H_i: U_i \times [m] \to X$ by $H_i(x, y, j) = S_i(x, y)(j)$ for  any $x, y \in U_i$  and $j \in [m]$. 
The map $H_i$, $i = 0, \ldots,k$, satisfies the boundary conditions
\begin{align*}
H_i(x, y, 0) &= S_i(x,y)(0) = x = p_1(x, y)\\
H_i(x, y, m) &= S_i(x,y) (m) = y = p_2(x, y),
\end{align*}
because the equality $\pi \circ S_i = id|_{U_i}$ implies that $S_i(x,y)$ is an $r$-path from $x$ to $y$ for $(x,y) \in U_i$. Also $S_i$ is $1$-Lipschitz and so is $H(-, j)$, $j =0,
\ldots, m$, due to the uniform metric on $X^{[m]}$
\begin{align*}
d_X (H_i (x_1, y_1, j), H_i (x_2, y_2, j)) &= d_X( S_i(x_1, y_1) (j), S_i(x_2, y_2) (j))\\
& \le  \sup_{j' \in [m]} d_X (S_i(x_1, y_1)(j'), S_i (x_2, y_2)(j')) \\
& = d_{X^{[m]}} (S_i(x_1, y_1), S_i (x_2, y_2))\\
& \le d_{X \times X} \big((x_1, y_1), (x_2, y_2) \big).
\end{align*}
Note that $p_1$ and $p_2$ are $1$-Lipschitz maps, and according to this fact we find a $(1,r)$-homotopy connecting them.
Moreover, $H((x,y),-)$ is $r$-Lipschitz, because $S_i (x,y)$ is an $r$-path. Hence, the open cover $\{U_0, U_1, \dots, U_k\}$ admits a $(1,r)$-homotopy $H_i$ on each $U_i$. Since $D_r(p_1, p_2)$ is defined as the minimal number of such open sets admitting a $(1,r)$-homotopy between $p_1$ and $p_2$, we conclude that $D_r(p_1, p_2) \leq TC_r(X)$.
 
Now, assume  that $D_r(p_1, p_2) = k$. Then there exists an open cover $\{V_0, V_1, \dots, V_k\}$ of $X \times X$ such that on each $V_i$, there exists a $(1,r)$-homotopy $H_i: V_i \times [m] \to X$ satisfying $H_i(x,y, 0) = p_1(x,y) =x$ and $H_i(x,y, m) = p_2(x,y) = y$ for all $(x,y) \in V_i$. 
Define motion planning $S_i: V_i \to X^{[m]}$ by setting $S_i(x, y) (j) = H(x,y,j)$ for $j \in [m]$,  the $r$-path in $X$ traced by the $(1,r)$-homotopy $H_i$ between the points $p_1(x)$ and $p_2(x)$. Since $S_i (x,y) (0) = H_i(x, y, 0) = p_1(x,y) = x$ and $S_i(x,y) (m) = H_i(x, y, m) = p_2(x,y) = y$,
\[
\pi \circ S_i(x,y) = (S_i(x,y) (0), S_i(x,y) (m)) = (x,y) = id_{X \times X} (x,y), 
\]
and hence this defines a valid section of the projection $\pi$. 
 Thus, the open cover $\{V_0, V_1, \dots, V_k\}$  of $X \times X$ admits sections $S_i: V_i \to X^{[m]}$, implying that $TC_r(X) \leq D_r(p_1, p_2)$. 
 Since we show that $TC_r(X) \leq D_r(p_1, p_2)$ and $D_r(p_1, p_2) \leq TC_r(X)$, it follows that $TC_r(X) = D_r(p_1, p_2)$. 
\end{proof}

It is noted that similar symbols, $TC_r$ and $TC_{r,s}$ were used in \cite{Rudiak, Zapata}, but they differ  in their definitions, values, uses and behaviours.

\section{Discrete homotopic distance of compositions} 
In this section, we study the behaviour of $D_r$ for certain compositions of maps. 
Let $X, Y, Z$ be topological spaces. Consider $f, g: X \to Y$ , $k: Y \to Z$ and $h, h': Z \to X$ be maps. It is proven that $D(f \circ h, g \circ h) \leq D(f, g)$, $D(f \circ h, g \circ h') \leq D(h, h')$ and $D(h \circ f, h \circ g) \leq D(f, g)$ \cite{Mac'ias}. We establish these relations for the discrete case. Using the following theorems, many important results about the $cat_r$ and $TC_r$ of maps can be derived. 
Specially by these inequalities, we present some valuable upper bounds for $cat_r$ and $D_r$.

\begin{lemma}\label{pC.0} 
Let $f, g: X \to Y$ be $s_1$-Lipschitz maps, and let $h, h': Y \to Z$ be $s_2$-Lipschitz maps for $s_1, s_2, r \ge 0$. Then:
\begin{itemize} 
\item [1.] 
If $f$ and $g$ are $(s_1, r)$-homotopic, then $h \circ f \simeq_{(s_2s_1, s_2r)} h \circ g$. In particular, if $s_2 \leq 1$, then $h \circ f$ and $h \circ g$ are $(s_1, r)$-homotopic. 
\item [2.] 
If $h$ and $h'$ are $(s_2, r)$-homotopic, then $h^\prime \circ f$ and $h \circ f$ are also $(s_1s_2, r)$-homotopic. 
\end{itemize} 
\end{lemma}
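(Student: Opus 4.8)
The plan is to construct the required homotopies explicitly by composing the given $(s_1,r)$- or $(s_2,r)$-homotopy with the relevant Lipschitz map, and then to track how the Lipschitz constants of the horizontal slices $F(-,i)$ and the vertical (time) slices $F(x,-)$ transform under that composition. Everything reduces to the elementary fact that a composite of an $a$-Lipschitz map with a $b$-Lipschitz map is $ab$-Lipschitz, applied separately to the two families of slices.

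For part 1, I would begin with an $(s_1,r)$-homotopy $F: X \times [m] \to Y$ witnessing $f \simeq_{(s_1,r)} g$, so that $F(x,0)=f(x)$, $F(x,m)=g(x)$, each $F(x,-)$ is $r$-Lipschitz, and each $F(-,i)$ is $s_1$-Lipschitz. I then set $G := h \circ F: X \times [m] \to Z$. The boundary conditions $G(x,0)=h(f(x))$ and $G(x,m)=h(g(x))$ are immediate. The heart of the argument is the two estimates. For the vertical slice, $d_Z(G(x,i),G(x,i')) \le s_2\, d_Y(F(x,i),F(x,i')) \le s_2 r\, d(i,i')$, using that $h$ is $s_2$-Lipschitz and $F(x,-)$ is $r$-Lipschitz, so $G(x,-)$ is $s_2 r$-Lipschitz; for the horizontal slice, the analogous chain gives $d_Z(G(x,i),G(x',i)) \le s_2 s_1\, d_X(x,x')$, so $G(-,i)$ is $s_2 s_1$-Lipschitz. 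This exhibits $G$ as an $(s_2 s_1, s_2 r)$-homotopy. For the refinement when $s_2 \le 1$, I would note only that $s_2 s_1 \le s_1$ and $s_2 r \le r$, together with the observation that any $a$-Lipschitz map is automatically $b$-Lipschitz whenever $a \le b$; hence the same $G$ already serves as an $(s_1,r)$-homotopy.

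For part 2, the symmetric device is to precompose in the $Y$-variable rather than postcompose. Starting from an $(s_2,r)$-homotopy $H: Y \times [m] \to Z$ with $H(y,0)=h(y)$ and $H(y,m)=h'(y)$, I would define $G: X \times [m] \to Z$ by $G(x,i):=H(f(x),i)$, whose boundaries are $h \circ f$ and $h' \circ f$. Here the vertical slice inherits its constant unchanged, since $G(x,-)=H(f(x),-)$ is $r$-Lipschitz directly; the horizontal slice is the composite $G(-,i)=H(-,i)\circ f$ of the $s_2$-Lipschitz map $H(-,i)$ with the $s_1$-Lipschitz map $f$, hence $s_1 s_2$-Lipschitz. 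This yields the claimed $(s_1 s_2, r)$-homotopy between $h \circ f$ and $h' \circ f$.

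I do not anticipate a genuine obstacle, since no transitivity or gluing is needed and the constants compose in the obvious way. The only point requiring mild care is the asymmetry between the two parts: postcomposition by $h$ in part 1 scales the time constant by $s_2$ (giving $s_2 r$), whereas precomposition by $f$ in part 2 leaves the time constant at $r$ and scales only the horizontal constant (giving $s_1 s_2$). I would therefore make sure to apply each Lipschitz factor to the correct slice in each case.
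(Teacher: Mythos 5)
Your proof is correct and follows essentially the same route as the paper's: both construct the new homotopy by composing the given homotopy with the Lipschitz map ($h\circ F$ in part 1, $H(f(\cdot),\cdot)$ in part 2) and then verify the two slice-wise Lipschitz constants by the elementary composition rule. Your explicit treatment of the ``in particular'' clause via monotonicity of Lipschitz constants is a small addition the paper leaves implicit, but otherwise the arguments coincide.
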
 
\begin{proof} 
\begin{itemize} 
\item [1.] 
Consider a map $F: X \times [m] \to Y$ as an $(s_1, r)$-homotopy between $f$ and $g$. Define a map $K: X \times [m] \to Z$ by $K(x, i) = h(F(x, i))$. Thus, $K(x, 0) = h(f(x))$ and $K(x, m) = h(g(x))$. 
We show that $K(-, i)$ is an $s_2 s_1$-Lipschitz map. For every $ x_1, x_2 \in X$, and $i \in [m]$
\[ 
\begin{aligned} 
 d(K(x_2, i), K(x_1, i)) & = d(h(F(x_2, i)), h(F(x_1, i))) \\ 
& \leq s_2 d(F(x_2, i), F(x_1, i)) \leq s_2 s_1 d(x_2, x_1). 
\end{aligned} 
\] 
Moreover, $K(x,-)$ is an $r$-Lipschitz map, because for every $ i_1, i_2 \in [m]$ and $x \in X$, we have the following and hence $h \circ f$ and $h \circ g$ are $(s_1s_2, s_2r)$-homotopic;
\[ 
\begin{aligned} 
 d(K(x, i_2), K(x, i_1)) & = d(h(F(x, i_2)), h(F(x, i_1))) \\ 
& \leq s_2 d(F(x, i_2), F(x, i_1)) \leq s_2 r d(i_2, i_1).
\end{aligned} 
\] 
\item [2.] 
Consider a map $G: Y \times [m] \to Z$ as an $(s_2, r)$-homotopy between $h$ and $h^\prime$. Define a map $H: X \times [m] \to Z$ by  $H(x, i) = G(f(x), i)$. The map $H$ is well-defined, $H(x, 0) = h(f(x))$ and $H(x, m) =h^\prime(f(x))$. 
Now, we show that $H(-,i)$ is an $s_2 s_1$-Lipschitz map for each $i \in [m]$. For every $ x_1, x_2 \in X$,
\[ 
\begin{aligned} 
d(H(x_2, i), H(x_1, i)) & = d(G(f(x_2), i), G(f(x_1), i)) \\ 
& \leq s_2 d(f(x_2), f(x_1))\leq s_2 s_1 d(x_2, x_1).  
\end{aligned} 
\] 
Also, $H(x,-)$ is an $r$-Lipschitz map, because  for every $ i_1, i_2 \in [m]$,
\[d(H(x, i_2), H(x, i_1)) = d(G(f(x), i_2), G(f(x), i_1))\leq r d(i_2, i_1).\]
Therefore $h \circ f$ and $h^\prime \circ f$ are $(s_2s_1, r)$-homotopic. 
\end{itemize} 
\end{proof}

Lemma \ref{pC.0} states that the composition of maps preserves $(s,r)$-homotopy up to some scales multiplied. Now we use these properties to obtain information about the discrete homotopic distance between maps.

\begin{proposition}\label{pC.1} 
Let $X$, $Y$ and $Z$ be metric spaces, $f, g: X \to Y$ be $s_1$-Lipschitz maps and $h: Y \to Z$ be an $s_2$-Lipschitz map. Then $D_{s_2r}(h \circ f, h \circ g) \leq D_r(f, g)$. 
\end{proposition}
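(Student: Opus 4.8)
The plan is to reduce the whole statement to a chart-by-chart application of Lemma \ref{pC.0}(1). First I would dispose of the trivial case: if $D_r(f,g)=\infty$ there is nothing to prove, so I assume $D_r(f,g)=k$ is finite. By Definition \ref{dD.1} there then exists an open cover $U_0,\dots,U_k$ of $X$ together with $(s_1,r)$-homotopies witnessing $f|_{U_i}\simeq_{(s_1,r)} g|_{U_i}$ for every $i=0,\dots,k$. The whole argument will keep this same cover and simply post-compose with $h$.

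The key observation is that restriction commutes with post-composition, so that $(h\circ f)|_{U_i}=h\circ(f|_{U_i})$ and likewise $(h\circ g)|_{U_i}=h\circ(g|_{U_i})$. Since restrictions of Lipschitz maps stay Lipschitz with the same constant, each $f|_{U_i}$ and $g|_{U_i}$ is $s_1$-Lipschitz, and $h$ is $s_2$-Lipschitz; hence Lemma \ref{pC.0}(1) applies on each element of the cover and gives
\[
(h\circ f)|_{U_i}=h\circ(f|_{U_i})\simeq_{(s_2s_1,\,s_2r)} h\circ(g|_{U_i})=(h\circ g)|_{U_i}.
\]

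It then remains only to read this off as a distance. The composites $h\circ f$ and $h\circ g$ are $s_2s_1$-Lipschitz maps $X\to Z$, so by Definition \ref{dD.1} the quantity $D_{s_2r}(h\circ f,h\circ g)$ is exactly the distance computed with $(s_2s_1,\,s_2r)$-homotopies, namely with scale $s=s_2s_1$ and step length $s_2r$. These are precisely the two parameters produced by the lemma, so the same cover $U_0,\dots,U_k$ witnesses $D_{s_2r}(h\circ f,h\circ g)\le k=D_r(f,g)$.

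I do not anticipate a genuine obstacle: the mathematical content is carried entirely by Lemma \ref{pC.0}(1), and the rest is bookkeeping. The only points that warrant care are that the two scales $(s_2s_1,s_2r)$ delivered by the lemma match exactly the Lipschitz constant and the subscript demanded by $D_{s_2r}$, that restrictions preserve the Lipschitz constant so the lemma's hypotheses hold on each $U_i$, and that the relevant cover is a cover of $X$ (correct, since both composites have domain $X$).
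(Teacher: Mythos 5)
Your proposal is correct and follows essentially the same route as the paper's proof: restrict to the cover witnessing $D_r(f,g)=k$, note that restriction commutes with post-composition, apply Lemma \ref{pC.0}(1) on each $U_i$ to obtain $(s_1s_2,\,s_2r)$-homotopies, and read off the bound. Your explicit handling of the infinite case and the check that the resulting scales match Definition \ref{dD.1} are minor additions in care, not in substance.
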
 
\begin{proof} 
Assume that $D_r(f, g) = k$. There exists an open cover $\{U_0, \dots, U_k\}$ of $X$, such that $f|_{U_i} \simeq_{(s_1,r)} g|_{U_i}$ for each $i$. By Lemma \ref{pC.0}(1), $h \circ g|_{U_i} \simeq_{(s_1s_2, s_2r)} h \circ f|_{U_i}$. Since $(h \circ f)|_{U_i} = h \circ f|_{U_i}$ and $(h \circ g)|_{U_i} = h \circ g|_{U_i}$, it follows that $(h \circ f)|_{U_i} \simeq_{(s_1s_2, s_2r)} (h \circ g)|_{U_i}$. Therefore, $D_{s_2r}(h \circ f, h \circ g) \leq k = D_r(f, g)$.
\end{proof} 

It is well-known that the Lusternik-Schnirelmann category of a map is less than or equal to the Lusternik-Schnirelmann of the domain. This inequality in discrete case, is obtained  by using the discrete homotopy distance.

\begin{corollary}\label{cC.1} 
Let $X$ be an $r$-connected metric space and $f$ be an $s$-Lipschitz map with $s \le 1$. Then, $cat_r(f) \leq cat_r(X).$ 
\end{corollary}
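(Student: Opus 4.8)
The plan is to exploit the factorisation $f = f \circ id_X$ together with the composition behaviour recorded in Lemma \ref{pC.0}. Recall that $cat_r(X) = cat_r(id_X)$, so a cover realising $cat_r(X)$ is built from open sets on which the identity contracts discretely; pushing each such contraction forward through $f$ should produce a contraction of $f$ on the same open set, and hence the same cover will witness $cat_r(f) \le cat_r(X)$. If $cat_r(X) = \infty$ there is nothing to prove, so I assume $cat_r(X) = k$ is finite.

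First I would fix an open cover $\{U_0, \dots, U_k\}$ of $X$ realising $cat_r(X) = k$, so that each $U_i$ is $r$-contractible in $X$; that is, $id_X|_{U_i} \simeq_{(1,r)} c_{x_i}$ for some point $x_i \in U_i$. I would then apply Lemma \ref{pC.0}(1) on the domain $U_i$ with the $1$-Lipschitz pair $id_X|_{U_i}, c_{x_i}$ in the role of the inner maps (so $s_1 = 1$) and the $s$-Lipschitz map $f$ in the role of the outer map (so $s_2 = s$). Since $f \circ id_X|_{U_i} = f|_{U_i}$ and $f \circ c_{x_i} = c_{f(x_i)}$, this gives
\[
f|_{U_i} \simeq_{(s,\, sr)} c_{f(x_i)} \qquad (i = 0, \dots, k).
\]

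The one genuinely nontrivial step --- and the only place the hypothesis $s \le 1$ enters --- is reconciling the vertical scale $sr$ produced by the composition with the scale $r$ demanded by $cat_r(f)$. Left-composing with an $s$-Lipschitz map multiplies the step length of the vertical maps $F(x,-)$ by $s$, so Lemma \ref{pC.0}(1) only yields an $(s, sr)$-homotopy. However, because $s \le 1$ we have $sr \le r$, so the vertical maps, being $sr$-Lipschitz, are a fortiori $r$-Lipschitz; hence the same homotopy is in fact an $(s,r)$-homotopy and $f|_{U_i} \simeq_{(s,r)} c_{f(x_i)}$. Thus $\{U_0, \dots, U_k\}$ exhibits $f$ as $(s,r)$-homotopic to a constant map on each piece, whence $cat_r(f) \le k = cat_r(X)$. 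Equivalently, one can phrase the whole argument at the level of $D_r$: by Proposition \ref{pD.2} the claim reads $D_r(f, c_{f(x_0)}) \le D_r(id_X, c_{x_0})$, Proposition \ref{pC.1} (with $h = f$) gives $D_{sr}(f, c_{f(x_0)}) \le D_r(id_X, c_{x_0})$, and the monotonicity of Proposition \ref{pD.0}, applied to $sr \le r$, upgrades $D_{sr}$ to $D_r$ --- the same scale-rescaling obstacle handled in the same way.
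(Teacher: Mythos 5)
Your proposal is correct and takes essentially the same route as the paper: the paper's proof is exactly your closing reformulation, namely Proposition \ref{pD.2} to rewrite both quantities as distances, Proposition \ref{pC.1} with $h=f$ to obtain $D_{sr}(f,c_{x_0}) \le D_r(id_X,c_{x_0})$, and Proposition \ref{pD.0} with $sr \le r$ to upgrade $D_{sr}$ to $D_r$. Your cover-level presentation is the same computation with those propositions unpacked into Lemma \ref{pC.0}(1) and the Lipschitz-scale observation that an $sr$-Lipschitz vertical map is $r$-Lipschitz when $s \le 1$.
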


\begin{proof} 
Proposition \ref{pD.2} items (1)  and (2) state that 
$cat_r(X) =D_r (id_X, c_{x_0})$ 
and  $cat_r(f) =D_r (f \circ id_X, f \circ c_{x_0})$ 
respectively. Substituting $h = f$,  $ f = id_X$ and 
$g= c_{x_0}$
into the inequality of Proposition
 \ref{pC.1}, gives 
$D_{sr}(f, c_{x_0})\leq D_r(id_X,c_{x_0})$. 
Since $s \le 1$, we have 
$sr \le r$, and then by Proposition \ref{pD.0}, 
$D_{r}(f, c_{x_0})\leq D_{sr} (f, c_{x_0})$. 
Combining these inequalities leads to the next sequence of inequalities and thus we are done; 
\[cat_r(f) = D_{r}(f, c_{x_0})\leq D_{sr} (f, c_{x_0}) \leq D_r(id_X,c_{x_0}) = cat_r(X).
\]
\end{proof}
 
Corollary \ref{cC.1} investigates discrete category of $s$-Lipschitz maps for $s \le 1$. If $s >1$, we have no such inequality.
Because the constant map is $s$-Lipschitz for any non-negative real number $s$ and $cat_r(c_x) =0 \le cat_r(X)$. Also there are $s$-Lipschitz maps being far from constant map; That is $cat_r(f)$ is large enough to be greater than $cat_r(X)$.

\begin{proposition}\label{pC.2} 
Let $X$, $Y$, and $Z$ be metric spaces, $f, g: X \to Y$ be $s_1$-Lipschitz maps, and $h: Z \to X$ be $s_2$-Lipschitz. Then  $D_r(f \circ h, g \circ h) \leq D_r(f, g)$.
\end{proposition}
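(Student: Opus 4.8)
The plan is to pull the covering and the homotopies witnessing $D_r(f,g)$ back along $h$. First I would set $k = D_r(f,g)$, assuming it is finite (otherwise the inequality is vacuous), and fix an open cover $U_0, \dots, U_k$ of $X$ together with $(s_1, r)$-homotopies $F_i \colon U_i \times [m_i] \to Y$ such that $F_i(-,0) = f|_{U_i}$ and $F_i(-, m_i) = g|_{U_i}$.

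Next I would define $V_i := h^{-1}(U_i)$ for each $i$. Since $h$ is $s_2$-Lipschitz it is continuous, so each $V_i$ is open; moreover $\{V_i\}_{i=0}^{k}$ covers $Z$ because $\{U_i\}$ covers $X$ and $h(Z) \subseteq X$. I would then define $K_i \colon V_i \times [m_i] \to Y$ by $K_i(z, j) = F_i(h(z), j)$, which immediately satisfies the boundary conditions $K_i(z,0) = f(h(z)) = (f\circ h)(z)$ and $K_i(z, m_i) = g(h(z)) = (g \circ h)(z)$.

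The heart of the argument is verifying that $K_i$ is an $(s_1 s_2, r)$-homotopy, which is the relation appropriate to the composites $f \circ h$ and $g \circ h$, since these are $s_1 s_2$-Lipschitz and so $D_r(f\circ h, g\circ h)$ is governed by $(s_1 s_2, r)$-homotopy in Definition \ref{dD.1}. For the spatial direction, for $z_1, z_2 \in V_i$ and $j \in [m_i]$ I would estimate
\[
d_Y(K_i(z_1, j), K_i(z_2, j)) \le s_1 \, d_X(h(z_1), h(z_2)) \le s_1 s_2 \, d_Z(z_1, z_2),
\]
using that $F_i(-,j)$ is $s_1$-Lipschitz and $h$ is $s_2$-Lipschitz. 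The point I expect to be the main subtlety, and the essential difference from Proposition \ref{pC.1}, is the time direction: because $h$ alters only the spatial argument, the homotopy parameter passes through unchanged, so $K_i(z,-) = F_i(h(z), -)$ stays $r$-Lipschitz rather than being rescaled to $s_2 r$ as in the post-composition case. Hence $(f\circ h)|_{V_i} \simeq_{(s_1 s_2, r)} (g \circ h)|_{V_i}$ for every $i$, and the cover $\{V_i\}_{i=0}^{k}$ exhibits $D_r(f\circ h, g\circ h) \le k = D_r(f,g)$, as desired.
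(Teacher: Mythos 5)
Your proposal is correct and follows essentially the same route as the paper: pull the open cover back along $h$ via $V_i = h^{-1}(U_i)$ and pre-compose the witnessing homotopies with $h$, observing that the time parameter is untouched so the relation stays $(s_1s_2, r)$-homotopy rather than degrading to $(s_1s_2, s_2r)$ as in Proposition \ref{pC.1}. The only difference is presentational: the paper delegates the Lipschitz verification of $K_i(z,j)=F_i(h(z),j)$ to Lemma \ref{pC.0}(2), whereas you carry out that verification inline, which amounts to re-proving that lemma in place.
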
 
\begin{proof} 
Assume that $D_r(f, g)=k$. There exists an open cover $\{U_0, \dots, U_k\}$ of $X$ such that $f|_{U_i} \simeq_{(s_1,r)} g|_{U_i}$ for each $i = 0, 1, \dots, k$. Put $f_i=f|_{U_i}$ and $g_i=g|_{U_i}$. Then $f_i \simeq_{(s_1,r)} g_i$. 
 Consider the subset $V_i=h^{-1}(U_i) \subset Z$. The restricted map $h_i: V_i \to X$ can be expressed as a composition of the map $\overline{h}_i: V_i \to U_i$ defined by $\overline{h}_i(x)= h(x)$ and the inclusion map $i_{U_i} : U_i \subset X$.  Therefore, we have$(f \circ h)_i =f \circ h_i= f_i \circ \overline{h}_i$ and similarly $(g \circ h)_i =g\circ h_i= g_i \circ \overline{h}_i.$
By Lemma \ref{pC.0}(2), $f_i \circ \overline{h}_i \simeq_{(s_2s_1, r)} g_i \circ \overline{h}_i$. Therefore, $(f \circ h)_i=f \circ h_i= f_i \circ \overline{h}_i \simeq_{(s_2s_1,r)} g_i \circ \overline{h}_i = (g \circ i_{U_i} ) \circ \overline{h}_i= g \circ h_i = (g \circ h)_i$. Thus, $(f \circ h)_i \simeq_{(s_2s_1,r)} ( g\circ h)_i$. Since $D_r(f \circ h , g \circ h)$ is the minimum number of such sets, we conclude that $D_r(f \circ h, g \circ h)\leq D_r(f,g)$. 
\end{proof}

As previously mentioned, $TC_r$ equals the distance between projection maps. This fact implies that $TC_r$ admit discrete category as a lower bound. It was directly proven in \cite{hasanzadeh}.

\begin{corollary} \label{cC.2} 
Let $X$ be an $r$-connected space for $r \ge 0$. Then $cat_r(X) \leq TC_r(X)$. 
\end{corollary}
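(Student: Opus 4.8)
The plan is to reduce the inequality to two identifications that are already available in the excerpt: $cat_r(X) = D_r(id_X, c_{x_0})$ from Proposition \ref{pD.2}(1), and $TC_r(X) = D_r(p_1, p_2)$ from Proposition \ref{cD.1}. Once both invariants are written as discrete homotopic distances, the comparison $cat_r(X) \le TC_r(X)$ becomes the problem of relating the pair $(id_X, c_{x_0})$ to the pair $(p_1, p_2)$ through a single composition on the right, which is exactly the situation governed by Proposition \ref{pC.2}.

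First I would introduce the inclusion $i_1: X \to X \times X$ given by $i_1(x) = (x, x_0)$ for the fixed basepoint $x_0$, and note that it is $1$-Lipschitz with respect to the $\ell^1$ product metric, since $d_{X \times X}(i_1(x), i_1(x')) = d_X(x, x')$. The key algebraic observation is that precomposing the projections with $i_1$ recovers precisely the pair defining $cat_r$: we have $p_1 \circ i_1 = id_X$ because $p_1(x, x_0) = x$, and $p_2 \circ i_1 = c_{x_0}$ because $p_2(x, x_0) = x_0$ is constant in $x$.

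Next I would apply Proposition \ref{pC.2} with $f = p_1$, $g = p_2$ (both $1$-Lipschitz) and $h = i_1$ (also $1$-Lipschitz), obtaining $D_r(p_1 \circ i_1, p_2 \circ i_1) \le D_r(p_1, p_2)$. Substituting the two compositions computed above yields $D_r(id_X, c_{x_0}) \le D_r(p_1, p_2)$. Finally, invoking Proposition \ref{pD.2}(1) on the left side (this is where the $r$-connectedness hypothesis enters) and Proposition \ref{cD.1} on the right side converts this inequality into $cat_r(X) \le TC_r(X)$, which is the assertion.

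The computations are entirely routine, so I do not anticipate a genuine obstacle; the only point requiring care is the bookkeeping of Lipschitz constants, because Proposition \ref{pC.2} is stated for maps with prescribed bounds and the distances on both sides must be taken at the same scale $r$. I would therefore verify explicitly that $p_1$, $p_2$, and $i_1$ are all $1$-Lipschitz and that both $id_X$ and $c_{x_0}$ qualify as $1$-Lipschitz (a constant map being $s$-Lipschitz for every $s \ge 0$), so that no distortion factor on $r$ appears and the chain of comparisons stays at a single scale.
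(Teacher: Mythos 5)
Your proposal is correct and follows essentially the same route as the paper's own proof: both insert the $1$-Lipschitz map $x \mapsto (x, x_0)$, observe that $p_1 \circ i_1 = id_X$ and $p_2 \circ i_1 = c_{x_0}$, apply Proposition \ref{pC.2} to get $D_r(id_X, c_{x_0}) \leq D_r(p_1, p_2)$, and then translate via Proposition \ref{pD.2}(1) and Proposition \ref{cD.1}. Your extra attention to the Lipschitz constants is sound but not a point of divergence; the argument is identical.
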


\begin{proof} 
Define projection maps $p_1, p_2: X \times X \to X$ by $p_1(x, x') = x$ and $p_2(x, x') = x'$, respectively. Let $x_0 \in X$ be a fixed point. Define $h:X \to X \times X$ by $h(x)=(x,x_0)$. Then we have $p_1 \circ h(x)=p_1(x,x_0)=x=id_X (x)$ and $p_2 \circ h(x) = p_2(x,x_0) = x_0 = c_{x_0} (x)$, for every $x \in X$. By Proposition \ref{pC.2}, we have $D_r(\text{id}_X, c_{x_0})=D_r(p_1 \circ h, p_2 \circ h) \leq D_r(p_1, p_2)$. By Corollary \ref{cD.1} $D_r(p_1, p_2) = TC_r(X)$. Also, by Proposition \ref{pD.2} (1), we know that $cat_r(X) = D_r(\text{id}_X, c_{x_0})$. Therefore, \[cat_r(X) = D_r(\text{id}_X, c_{x_0})= D_r(p_1 \circ h, p_2 \circ h) \leq D_r(p_1, p_2) = TC_r(X).\] 
\end{proof}

When the discrete category of a map is compared with the category of domain in Corollary \ref{cC.1}, some conditions on $s$ are needed. But for codomain, no condition on $s$ we require.

\begin{corollary} 
Let $X$ and $Y$ be metric spaces and $Y$ be an $r$-connected space. If $f: X \to Y$ is an $s$-Lipschitz map, then $cat_r(f) \leq cat_r(Y)$. 
\end{corollary}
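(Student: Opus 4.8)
The plan is to reduce both quantities to discrete homotopic distances and then apply the precomposition inequality of Proposition \ref{pC.2}. Since $Y$ is $r$-connected, Proposition \ref{pD.2}(1) applied to $Y$ gives $cat_r(Y) = D_r(id_Y, c_{y_0})$ for a fixed point $y_0 \in Y$, where $id_Y, c_{y_0}: Y \to Y$. On the other side, the identification $cat_r(f) = D_r(f, c_{y_0})$, with $c_{y_0}: X \to Y$ the constant map of value $y_0$, is exactly Proposition \ref{pD.2}(2); what makes this legitimate is that the $r$-connectedness of the target $Y$ forces all constant maps $X \to Y$ to be mutually $(s,r)$-homotopic (an $r$-path in $Y$ joining the two constant values induces the required $(s,r)$-homotopy), so the choice of base point is irrelevant.

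Next I would precompose the pair $(id_Y, c_{y_0})$ with $f$. Here $id_Y$ is $1$-Lipschitz and $c_{y_0}$ is $0$-Lipschitz, hence both are $1$-Lipschitz maps $Y \to Y$, while $f: X \to Y$ is the $s$-Lipschitz map to be inserted on the inside. Applying Proposition \ref{pC.2} with its $f, g$ taken to be $id_Y, c_{y_0}$ and its $h$ taken to be $f$ yields
\[
D_r(id_Y \circ f,\, c_{y_0} \circ f) \leq D_r(id_Y, c_{y_0}).
\]
Since $id_Y \circ f = f$ and $c_{y_0} \circ f = c_{y_0}$ (the constant map $X \to Y$), this reads $D_r(f, c_{y_0}) \leq D_r(id_Y, c_{y_0})$.

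Chaining the two identifications with this inequality then gives
\[
cat_r(f) = D_r(f, c_{y_0}) \leq D_r(id_Y, c_{y_0}) = cat_r(Y),
\]
which is the assertion.

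The point worth emphasizing, and the reason no hypothesis on $s$ is needed (in contrast with Corollary \ref{cC.1}), is that $f$ enters here by \emph{precomposition}. Proposition \ref{pC.2} leaves the step length $r$ unchanged, whereas the postcomposition estimate of Proposition \ref{pC.1} replaces $r$ by $s_2 r$ and therefore forced the condition $s \le 1$ in Corollary \ref{cC.1}; since no rescaling of $r$ occurs, no bound on the Lipschitz constant of $f$ is required. I expect the only genuinely delicate step to be the constant-map bookkeeping when invoking Proposition \ref{pD.2}(2): one must check that it is the $r$-connectedness of the codomain $Y$, where the constant values live, that underwrites $cat_r(f) = D_r(f, c_{y_0})$, rather than any connectivity assumption on the domain $X$ (which is not available here). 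Once this is noted, the argument of Proposition \ref{pD.2}(2) applies verbatim with $Y$ in the role of the $r$-connected space.
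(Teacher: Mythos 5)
Your proposal is correct and follows essentially the same route as the paper's own proof: identify $cat_r(Y)=D_r(\mathrm{id}_Y,c_{y_0})$ and $cat_r(f)=D_r(f,c_{y_0})$ via Proposition \ref{pD.2}, then apply the precomposition inequality of Proposition \ref{pC.2} with $h=f$. Your added observation that it is the $r$-connectedness of the codomain $Y$ (not the domain $X$) that justifies invoking Proposition \ref{pD.2}(2) is a point the paper glosses over, and it is handled correctly.
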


\begin{proof} 
According to Proposition \ref{pD.2}(1) and (2), $cat_r(Y)=D_r(\text{id}_Y, c_{y_0})$ and $cat_r(f)=D_r(f, c_{y_0})$ respectively. By Proposition \ref{pC.2}, $D_r (\text{id}_Y \circ f, c_{y_0} \circ f) \leq D_r (\text{id}_Y, c_{y_0})$. Then \[cat_r(f) = D_r (\text{id}_Y \circ f, c_{y_0} \circ f) \leq D_r (\text{id}_Y, c_{y_0}) = cat_r(Y).\] 
\end{proof}

By Corollaries \ref{cC.1} and \ref{cC.2}, for $r$-connected spaces $X$ and $Y$ and $s$-Lipschitz map $f:X \to Y$, if $s \le 1$ then
$cat_r (X) \le cat_r (f) \le cat_r(Y)$.
The following proposition gives an upper bound for the discrete distance between 
two maps, by the amount  of their discrete categories; That is the distance between two maps is less than or equal to the distance travelled to transfer from the first one to the constant map and go back to the second one.

\begin{proposition}
If $f, g: X \to Y$ are $s$-Lipschitz maps and $Y$ is $r$-connected, then
\[D_r(f, g) \leq (cat_r(f) + 1) \cdot (cat_r(g) + 1) -1.\]
\end{proposition}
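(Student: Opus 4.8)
The plan is to mimic the classical product bound for homotopic distance, exploiting the fact that $r$-connectedness of $Y$ makes all constant maps into $Y$ mutually $(s,r)$-homotopic, together with the transitivity established in Proposition~\ref{pr2.2}. First I would dispose of the trivial case: if either $cat_r(f)$ or $cat_r(g)$ equals $\infty$, then the right-hand side is $\infty$ and there is nothing to prove, so I may assume $cat_r(f)=p$ and $cat_r(g)=q$ are both finite.

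By Definition~\ref{dD.3}, since $cat_r(f)=p$ there is an open cover $\{U_0,\dots,U_p\}$ of $X$ together with constant maps $c_{a_i}$ such that $f|_{U_i}\simeq_{(s,r)} c_{a_i}$ for each $i$; likewise $cat_r(g)=q$ yields an open cover $\{V_0,\dots,V_q\}$ and constant maps $c_{b_j}$ with $g|_{V_j}\simeq_{(s,r)} c_{b_j}$ for each $j$. I would then form the refined cover consisting of the $(p+1)(q+1)$ open sets $W_{ij}=U_i\cap V_j$, which again covers $X$ since $\{U_i\}$ and $\{V_j\}$ do.

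The core step is to verify that $f$ and $g$ are $(s,r)$-homotopic on each $W_{ij}$. Restricting the respective $(s,r)$-homotopies to the smaller domain $W_{ij}$ preserves all three defining conditions of Definition~\ref{de2.1} --- the $r$-Lipschitz condition in the time direction and the $s$-Lipschitz condition in the space direction are both inherited by restriction --- so $f|_{W_{ij}}\simeq_{(s,r)} c_{a_i}|_{W_{ij}}$ and $g|_{W_{ij}}\simeq_{(s,r)} c_{b_j}|_{W_{ij}}$. Because $Y$ is $r$-connected, there is an $r$-path from $a_i$ to $b_j$, which (as noted just before Proposition~\ref{pD.2}) induces an $(s,r)$-homotopy $c_{a_i}|_{W_{ij}}\simeq_{(s,r)} c_{b_j}|_{W_{ij}}$. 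Chaining these three relations through the transitivity of Proposition~\ref{pr2.2} gives $f|_{W_{ij}}\simeq_{(s,r)} g|_{W_{ij}}$.

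Thus $\{W_{ij}\}_{i,j}$ is an open cover of $X$ by $(p+1)(q+1)$ sets on each of which $f$ and $g$ are $(s,r)$-homotopic, and the definition of $D_r$ yields $D_r(f,g)\le (p+1)(q+1)-1=(cat_r(f)+1)(cat_r(g)+1)-1$. I do not anticipate any serious obstacle: the only points requiring care are the bookkeeping that restrictions of $(s,r)$-homotopies remain $(s,r)$-homotopies and the appeal to $r$-connectedness to link the two families of constant maps, both of which are routine given the tools already in place.
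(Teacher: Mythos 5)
Your proposal is correct and follows essentially the same route as the paper's proof: intersect the two covers coming from $cat_r(f)$ and $cat_r(g)$, link $f$ and $g$ on each intersection through constant maps, and conclude by transitivity (Proposition~\ref{pr2.2}) with the count $(p+1)(q+1)$. The only difference is cosmetic --- the paper normalizes both families of homotopies to a single constant map $c_y$ via Proposition~\ref{pD.2}, while you keep distinct constants $c_{a_i}$, $c_{b_j}$ and bridge them explicitly with an $r$-path in $Y$; your version also spells out the restriction and infinite-case details that the paper leaves implicit.
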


\begin{proof}
Assume that $cat_r(f) = k$ and $cat_r(g) = l$. Then by Proposition \ref{pD.2}, there exist open covers $\{U_0, \dots, U_k\}$ and $\{V_0, \dots, V_l\}$ of $X$ such that
for each $i \in \{0, \dots, k\}$, $f|_{U_i} \simeq_{(s, r)} c_y$, and
for each $j \in \{0, \dots, l\}$, $g|_{V_j} \simeq_{(s, r)} c_y$ for some $y \in Y$.
Consider the open cover $\{U_i \cap V_j \mid i \in \{0, \dots, k\}, j \in \{0, \dots, l\}\}$ of $X$. For each $U_i \cap V_j$,  $f$ and $g$ are $(s, r)$-homotopic to the constant map $c_y$ and they are homotopic to each other by Proposition \ref{pr2.2}.
Since there are $(k + 1) \times (l + 1)$ such intersections, we have
$D_r(f, g) \leq (cat_r(f) + 1) \cdot (cat_r(g) + 1) -1$,
because we start numbering open sets from zero. 
\end{proof}

Propositions \ref{pC.1} and \ref{pC.2} surveys the discrete distance between maps composed with some fixed map. Proposition \ref{pC.3} deals with the case where the maps composed are the same up to homotopy.

\begin{proposition}\label{pC.3}
Let $X$, $Y$, and $Z$ be metric spaces, $s_1, s_2, r \ge 0$, and $f, g: X \to Y$ be $(s_1, r)$-homotopic maps. Suppose that $h, h': Z \to X$ be any $s_2$-Lipschitz maps. Then
\[
D_{(s_1+1)r}(f \circ h, g \circ h') \leq D_r(h, h').
\]
\end{proposition}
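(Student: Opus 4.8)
The plan is to combine the two given deformations---the $(s_1,r)$-homotopy $F: X\times[m]\to Y$ from $f$ to $g$, and the $(s_2,r)$-homotopies witnessing $D_r(h,h')$---into a single diagonal homotopy on each member of the cover, in the spirit of Propositions \ref{pC.1} and \ref{pC.2} but with both the outer and the inner map moving at once. First I would record the Lipschitz bookkeeping that makes the target distance meaningful: since $f=F(-,0)$ and $g=F(-,m)$ and each horizontal slice $F(-,j)$ is $s_1$-Lipschitz, both $f$ and $g$ are $s_1$-Lipschitz, so $f\circ h$ and $g\circ h'$ are $s_1s_2$-Lipschitz. Thus $D_{(s_1+1)r}(f\circ h,g\circ h')$ is the $(s_1+1)r$-distance taken with $s$-scale $s_1s_2$, and it suffices to produce, on each piece of a suitable cover, an $(s_1s_2,(s_1+1)r)$-homotopy from $f\circ h$ to $g\circ h'$.

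Next, suppose $D_r(h,h')=k$ and fix an open cover $U_0,\dots,U_k$ of $Z$ together with $(s_2,r)$-homotopies $G_i: U_i\times[n_i]\to X$ realizing $h|_{U_i}\simeq_{(s_2,r)}h'|_{U_i}$, so $G_i(z,0)=h(z)$ and $G_i(z,n_i)=h'(z)$. Working on a single $U_i$, I would first equalize the time-lengths of $F$ and $G_i$ using the extension described after Definition \ref{dD.1}: replace both by homotopies of common length $p_i=\max\{n_i,m\}$ by repeating the terminal value. This extension preserves all Lipschitz conditions, since a repeated value contributes a constant ($0$-Lipschitz) vertical segment whose horizontal slices equal the already-Lipschitz maps $g$ and $h'$.

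The core of the argument is the diagonal gluing $K_i: U_i\times[p_i]\to Y$ given by $K_i(z,j)=F\big(G_i(z,j),j\big)$. The endpoints are $K_i(z,0)=F(h(z),0)=(f\circ h)(z)$ and $K_i(z,p_i)=F(h'(z),p_i)=(g\circ h')(z)$, and each horizontal slice $K_i(-,j)=F(-,j)\circ G_i(-,j)$ is $s_1s_2$-Lipschitz as a composite of an $s_1$- and an $s_2$-Lipschitz map. The step I expect to carry the real content is the vertical estimate, where I would route the triangle inequality through the mixed point $F(G_i(z,j_1),j_2)$: for $z\in U_i$ and $j_1,j_2\in[p_i]$,
\[
d\big(K_i(z,j_1),K_i(z,j_2)\big)\le d\big(F(G_i(z,j_1),j_1),F(G_i(z,j_1),j_2)\big)+d\big(F(G_i(z,j_1),j_2),F(G_i(z,j_2),j_2)\big).
\]
The first summand is bounded by $r\,d(j_1,j_2)$ because $F(x,-)$ is $r$-Lipschitz, and the second by $s_1\,d(G_i(z,j_1),G_i(z,j_2))\le s_1 r\,d(j_1,j_2)$ because $F(-,j_2)$ is $s_1$-Lipschitz and $G_i(z,-)$ is $r$-Lipschitz. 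Adding gives exactly $(s_1+1)r\,d(j_1,j_2)$, so $K_i$ is an $(s_1s_2,(s_1+1)r)$-homotopy and $(f\circ h)|_{U_i}\simeq_{(s_1s_2,(s_1+1)r)}(g\circ h')|_{U_i}$.

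Finally, since the same cover $U_0,\dots,U_k$ now witnesses the $(s_1+1)r$-distance, I would conclude $D_{(s_1+1)r}(f\circ h,g\circ h')\le k=D_r(h,h')$. The only genuine obstacle is the vertical bound above: it is essential to split the triangle inequality through the mixed point so that the two separate Lipschitz hypotheses on $F$ (in its first and second variables) can each be used, which is precisely what produces the summands $r$ and $s_1r$. I would also note a two-stage alternative---first deform $f\circ h$ to $f\circ h'$ via $f\circ G_i$, then deform $f\circ h'$ to $g\circ h'$ via $F(h'(-),-)$, and concatenate by Proposition \ref{pr2.2}---which likewise proves the claim and even yields the marginally sharper vertical constant $\max\{s_1,1\}r\le(s_1+1)r$.
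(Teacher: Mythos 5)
Your proof is correct and takes essentially the same route as the paper's: the identical diagonal homotopy $K_i(z,j)=F\big(G_i(z,j),j\big)$, with the triangle inequality split through the mixed point $F(G_i(z,j_1),j_2)$ to combine the $r$-Lipschitz bound in the second variable with the $s_1 r$ bound coming from the first, giving the vertical constant $(s_1+1)r$. Your explicit equalization of the homotopy lengths is in fact tidier than the paper's treatment (which is loose with $m$, $m_1$, $m_2$), and your two-stage alternative via Proposition \ref{pr2.2}, yielding the slightly sharper constant $\max\{s_1,1\}\,r$, is a valid bonus observation.
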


\begin{proof}
Assume that $D_r(h, h') = k$. Then there exists an open cover $\{U_0, \dots, U_k\}$ of $Z$ such that $h|_{U_i} \simeq_{(s_2, r)} h'|_{U_i}$ for each $i = 0, \dots, k$. Specially, for $i=0, \ldots,k$, there exists $(s_2, r)$-homotopy $H_i:U_i \times [m_1] \to X$ such that 
\[
H_i(z,0) = h(z) \textrm{ and } H_i (z,m_1)= h'(z) \textrm{ for all } z \in U_i.
\]
Since $f$ and $g$ are $(s_1, r)$-homotopic, there exists $(s_1,r)$-homotopy $F: X \times [m] \to Y$ such that $H(x, 0) = f(x)$ and $H(x, m_2) = g(x)$.
Consider $m= \max\{m_1, m_2\}$ and the map $K: Z \times [m] \to Y$ defined by $K(z, i) = F(H(z, j), j)$.
Since $H(-,j)$ is $s_2$-Lipschitz and $F(-,j)$ is $s_1$-Lipschitz, the map $K(-,j)$ is $s_1s_2$-Lipschitz. 
Also, $K(z,-) = F(H(z,-),-)$ is $(s_1 +1)r$-Lipschitz, because the map $F(x, -)$ is an $(s_1,r)$-homotopy and  $H(z,-)$ is $r$-Lipschitz; For any $z \in U_i$ and $j_1, j_2 \in [m]$
\begin{align*}
d_Y(K(z,j_1), K(z,j_2)) &= d_Y (F(H(z, j_1), j_1), F(H(z, j_2), j_2))\\
& \le d_Y(F(H(z, j_1), j_1), F(H(z, j_1), j_2)) \\
& \quad  + d_Y(F(H(z, j_1), j_2), F(H(z, j_2), j_2))\\
 & \le r |j_2 - j_1| + d_Y(F(H(z, j_1), j_2), F(H(z, j_2), j_2))\\
 & \le  r |j_2 - j_1| + s_1 d_X (H(z, j_1), H(z, j_2))\\
 & \le  r |j_2 - j_1| + s_1 r|j_2 -j_1|\\
 & = (s_1  +1)r |j_2 -j_1|.
\end{align*}
Therefore $K$ is an $(s_1s_2, (s_1 +1)r)$-homotopy between the restrictions $f|_{U_i}$ and $g|_{U_i}$.
Since $D_{(s_1 +1)r} (f,g)$ is the minimum number of such covers, $D_{(s_1+1) r}(f \circ h, g \circ h') \leq D_r(h, h')$.
\end{proof}

In Proposition \ref{pC.3}, if the homotopy has smaller scale $s_1$, the scale of distance becomes smaller proven in Proposition \ref{pC.3n}.

\begin{proposition}\label{pC.3n}
Let $X$, $Y$, and $Z$ be metric spaces, $s_1, s_2, r \ge 0$, and  $f, g: X \to Y$ be $(s_1, r)$-homotopic maps. Suppose that $h, h': Z \to X$ be any $s_2$-Lipschitz maps for $s_2 \ge 0$. If $s_1 \le 1$, then
\[
D_{r}(f \circ h, g \circ h') \leq D_r(h, h').
\]
\end{proposition}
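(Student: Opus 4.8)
The plan is to establish this sharper bound by a concatenation argument in place of the simultaneous diagonal construction used for Proposition \ref{pC.3}. The improvement rests entirely on the hypothesis $s_1 \le 1$: postcomposing a homotopy with the $s_1$-Lipschitz map $f$ multiplies its vertical step length by $s_1$, so when $s_1 \le 1$ this step length never exceeds $r$ and no inflation of the time scale occurs.

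First I would fix an open cover witnessing $D_r(h, h') = k$, namely $\{U_0, \dots, U_k\}$ covering $Z$ with $h|_{U_i} \simeq_{(s_2, r)} h'|_{U_i}$ for each $i$. On each $U_i$ I then build a homotopy from $(f \circ h)|_{U_i}$ to $(g \circ h')|_{U_i}$ in two stages and splice them using transitivity, Proposition \ref{pr2.2}. In the first stage I postcompose: applying Lemma \ref{pC.0}(1) to $h|_{U_i} \simeq_{(s_2, r)} h'|_{U_i}$ with the $s_1$-Lipschitz map $f$ yields $(f \circ h)|_{U_i} \simeq_{(s_1 s_2,\, s_1 r)} (f \circ h')|_{U_i}$; since $s_1 \le 1$ forces $s_1 r \le r$, this is in fact an $(s_1 s_2, r)$-homotopy. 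In the second stage I precompose: applying Lemma \ref{pC.0}(2) to the given homotopy $f \simeq_{(s_1, r)} g$ precomposed with the $s_2$-Lipschitz map $h'|_{U_i}$ yields $(f \circ h')|_{U_i} \simeq_{(s_1 s_2, r)} (g \circ h')|_{U_i}$, the vertical scale remaining $r$.

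Both stages now carry the identical pair of scales $(s_1 s_2, r)$, so Proposition \ref{pr2.2} applies and gives $(f \circ h)|_{U_i} \simeq_{(s_1 s_2, r)} (g \circ h')|_{U_i}$ on every member of the cover. Since $f \circ h$ and $g \circ h'$ are $s_1 s_2$-Lipschitz, this is precisely the relation demanded by the definition of $D_r$, so the cover $\{U_0, \dots, U_k\}$ witnesses $D_r(f \circ h, g \circ h') \le k = D_r(h, h')$.

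The step I expect to require the most care is the scale bookkeeping before transitivity. Proposition \ref{pr2.2} is stated only for two homotopies sharing one fixed pair $(s, r)$, so I must verify that both intermediate homotopies can legitimately be regarded as $(s_1 s_2, r)$-homotopies with the same $s_1 s_2$ and the same $r$. This is exactly where $s_1 \le 1$ is indispensable: it is what collapses the postcomposition scale from $s_1 r$ down to $r$. Without that hypothesis the first stage would live at time scale $s_1 r > r$, transitivity at scale $r$ would be unavailable, and one would be thrown back to the coarser $(s_1 + 1)r$ bound of Proposition \ref{pC.3}.
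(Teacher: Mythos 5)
Your proposal is correct and follows essentially the same route as the paper: the same cover, the same two-stage splice via Lemma \ref{pC.0}(1) (postcomposition with $f$, using $s_1 \le 1$ to keep the vertical scale at $r$) and Lemma \ref{pC.0}(2) (precomposition of the homotopy $f \simeq_{(s_1,r)} g$ with $h'$), glued by transitivity from Proposition \ref{pr2.2}. Your scale bookkeeping at $(s_1 s_2, r)$ is in fact slightly more careful than the paper's, which records the intermediate homotopies at scale $(s_2, r)$; both are valid, and yours matches Definition \ref{dD.1} for the $s_1 s_2$-Lipschitz maps $f \circ h$ and $g \circ h'$ exactly.
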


\begin{proof}
Assume that $D_r(h, h') = k$. Then there exists an open cover $\{U_0, \dots, U_k\}$ of $Z$ such that $h|_{U_i} \simeq_{(s_2, r)} h'|_{U_i}$ for each $i = 0, \dots, k$. 
Then since $f$ is $s_1$-Lipschitz and $s_1 \le 1$, by Lemma \ref{pC.0} (1),
$f \circ h|_{U_i} \simeq_{(s_2, r)} f\circ h'|_{U_i}$.
Also since $f \simeq_{(s_1,r)} g$, $s_1 \le 1$ and $h'$ is an $s_2$-Lipschitz map, by Lemma \ref{pC.0} (2),
$f \circ h'|_{U_i} \simeq_{(s_2, r)} g \circ h'|_{U_i}$. 
Now we use Proposition \ref{pr2.2}, and achieve the homotopy $
f \circ h|_{U_i} \simeq_{(s_2, r)} g \circ h'|_{U_i}$.
Since $D_r(f \circ h, g \circ h')$ is the smallest integer corresponded to which there exists such a cover, $D_r (f \circ h, g \circ h') \le D_r(h,h')$.
\end{proof}

\section{Homotopy invariance} 
In this section, we demonstrate that discrete homotopy is invariant under certain conditions. Let $X$, $Y$, $X'$ and $Y' $ be topological spaces, and let $f, g : X \to Y$ be maps.  Also, let $\alpha : Y \to Y'$ and $\beta: X' \to X$ be maps such that $\alpha$ has a left homotopy inverse and $\beta$ has a right homotopy inverse . Then $D(\alpha \circ f, \alpha \circ g) = D( f, g)$ and $D(f, g)= D(f \circ \beta, g \circ \beta)$
\cite[Proposition 3.11 and Proposition 3.12]{Mac'ias}. We present Proposition \ref{pHi.1} and \ref{pHi.2} respectively as the modified version  of these results. Furthermore, if there exist homotopy equivalences  $\alpha$ and $\beta$ such that $ \alpha \circ f \circ \beta \simeq f'$ and $ \alpha \circ g \circ \beta \simeq g'$, then $D(f, g) = D( f', g' )$, as shown in \cite[Proposition 3.13]{Mac'ias}. Proposition \ref{pHi.3} provides the corresponding result for the discrete case.

\begin{proposition}\label{pHi.1} 
Let $X$, $Y$, and $Y' $ be metric spaces and let $f, g: X \to Y$ be $s_1$-Lipschitz maps. Also, let $\alpha: Y \to Y'$ be an $s_2$-Lipschitz map  with discrete left homotopy inverse. Then, $D_r(f, g) = D_{s_2r}(\alpha \circ f, \alpha \circ g)$. 
\end{proposition}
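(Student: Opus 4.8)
The plan is to establish the equality by proving the two inequalities $D_{s_2 r}(\alpha \circ f, \alpha \circ g) \le D_r(f,g)$ and $D_r(f,g) \le D_{s_2 r}(\alpha \circ f, \alpha \circ g)$ separately. The first inequality is immediate: since $f,g$ are $s_1$-Lipschitz and $\alpha$ is $s_2$-Lipschitz, applying Proposition \ref{pC.1} with $h = \alpha$ yields $D_{s_2 r}(\alpha \circ f, \alpha \circ g) \le D_r(f,g)$ directly, with no further work needed.

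For the reverse inequality I would use the discrete left homotopy inverse. Let $\beta$ denote this inverse, so that $\beta \circ \alpha$ is $(1,\cdot)$-homotopic to $\mathrm{id}_Y$ at the relevant scale, and let $s_\beta$ be its Lipschitz constant. Starting from a cover $\{U_0,\dots,U_k\}$ realizing $D_{s_2 r}(\alpha \circ f, \alpha \circ g) = k$, on each $U_i$ we have $(\alpha \circ f)|_{U_i} \simeq_{(s_1 s_2,\, s_2 r)} (\alpha \circ g)|_{U_i}$. Composing this homotopy with $\beta$ via Lemma \ref{pC.0}(1) pushes it forward to $(\beta \circ \alpha \circ f)|_{U_i} \simeq_{(s_\beta s_1 s_2,\, s_\beta s_2 r)} (\beta \circ \alpha \circ g)|_{U_i}$ on the same cover, so that $D_{s_\beta s_2 r}(\beta \circ \alpha \circ f, \beta \circ \alpha \circ g) \le k$. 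Next, using the homotopy inverse relation $\beta \circ \alpha \simeq \mathrm{id}_Y$ together with Lemma \ref{pC.0}(2) to obtain $\beta \circ \alpha \circ f \simeq f$ and $\beta \circ \alpha \circ g \simeq g$, and invoking the homotopy invariance of Remark \ref{pD.1}(3), I would replace $\beta \circ \alpha \circ f$ and $\beta \circ \alpha \circ g$ by $f$ and $g$ to get $D_{s_\beta s_2 r}(f,g) \le k$. Finally, Proposition \ref{pD.0} (monotonicity of $D_\rho$ in $\rho$) upgrades this to $D_r(f,g) \le k$ provided $s_\beta s_2 \le 1$, completing the chain.

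The main obstacle is the bookkeeping of scales, so that the two factors of $s_2$ and the Lipschitz constant of $\beta$ combine to leave exactly the scale $r$ on the left of the final inequality. The appearance of only $s_2$ in the statement indicates that the definition of discrete left homotopy inverse is calibrated so that $s_\beta s_2 = 1$ and $\beta \circ \alpha \simeq_{(1,\, s_2 r)} \mathrm{id}_Y$; with this calibration the forward push of Lemma \ref{pC.0}(1) gives exactly $\simeq_{(s_1, r)}$ on each $U_i$, and Lemma \ref{pC.0}(2) produces $\beta \circ \alpha \circ f \simeq_{(s_1, r)} f$ at the matching scale, so that Remark \ref{pD.1}(3) applies directly at scale $r$ with no appeal to Proposition \ref{pD.0}. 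I would therefore first pin down the precise scale in the definition of the inverse, verify that each composition yields a time-scale coinciding with the scale at which the homotopy-invariance remark is applied, and only then chain the inequalities; ensuring this coincidence of scales at every step is the delicate point of the argument.
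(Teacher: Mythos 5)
Your proposal takes essentially the same route as the paper: the forward inequality is exactly Proposition \ref{pC.1}, and the reverse inequality composes the $(s_1s_2,s_2r)$-homotopies on each $U_i$ with the left inverse $\beta$, whose Lipschitz constant $1/s_2$ cancels both factors of $s_2$ via Lemma \ref{pC.0}(1), and then replaces $\beta\circ\alpha\circ f$ and $\beta\circ\alpha\circ g$ by $f$ and $g$ (the paper writes this last step as literal equalities $f|_{U_i}=(\beta\circ\alpha)\circ f|_{U_i}$, so your handling via Lemma \ref{pC.0}(2) and Remark \ref{pD.1}(3) is if anything the more careful version). The one detail to fix is your guessed calibration $\beta\circ\alpha\simeq_{(1,\,s_2r)}\mathrm{id}_Y$: the paper takes $\beta$ to be $1/s_2$-Lipschitz with $\beta\circ\alpha\simeq_{(1,\,r)}\mathrm{id}_Y$, which is what makes Lemma \ref{pC.0}(2) give $\beta\circ\alpha\circ f\simeq_{(s_1,r)} f$ at the matching time scale, whereas your scale would produce $(s_1,s_2r)$ and match $r$ only when $s_2\le 1$.
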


\begin{proof}
According to Proposition \ref{pC.1}, $D_{s_2r}(\alpha \circ f, \alpha \circ g) \leq D_r(f, g)$.
For the reverse inequality, let $\beta: Y' \to Y$ be the $1/s_2$-Lipschitz map with $\beta \circ \alpha \simeq_{(1,r)} id_Y$ and also let  $D_{s_2r}(\alpha \circ f,\alpha \circ g) = k$. Therefore, there exist $k+1$ open subsets $U_i \subset X$ such that $(\alpha \circ f) |_{U_i} \simeq_{(s_1s_2,s_2r)} (\alpha \circ g)|_{U_i}$.  Then Proposition \ref{pr2.2} implies that $\beta \circ (\alpha \circ f|_{U_i})   \simeq_{(s_1,r)}  \beta \circ (\alpha \circ g|_{U_i})$ and then
\begin{align*} 
f|_{U_i} = id \circ f|_{U_i} & = (\beta \circ \alpha) \circ f|_{U_i}  \\
& =  \beta \circ (\alpha \circ f|_{U_i})  \\
& \simeq_{(s_1,r)}  \beta \circ (\alpha \circ g|_{U_i})\\
 &= (\beta \circ \alpha) \circ g |_{U_i} 
 \\
 & ={id} \circ g |_{U_i}= g|_{U_i}.
\end{align*}
Therefore, $D_r(f, g) \leq k = D_{s_2r}(\alpha \circ f, \alpha \circ g).$
\end{proof}

\begin{proposition}\label{pHi.2} 
Let $X$, $Y$, and $X'$ be metric spaces and let $f,g: X \to Y$ be two $s_1$-Lipschitz maps. Let $\beta: X' \to X$ be $s_2$-Lipschitz map with a discrete right homotopy inverse. Then, $D_r(f, g)= D_r(f \circ \beta, g \circ \beta)$. 
\end{proposition}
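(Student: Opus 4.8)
The plan is to prove the two inequalities $D_r(f\circ\beta, g\circ\beta)\le D_r(f,g)$ and $D_r(f,g)\le D_r(f\circ\beta, g\circ\beta)$ separately, paralleling the proof of Proposition \ref{pHi.1} but with the roles of pre- and post-composition interchanged, since here $\beta$ is pre-composed rather than post-composed. The first inequality is immediate: as $\beta:X'\to X$ is $s_2$-Lipschitz, applying Proposition \ref{pC.2} with $h=\beta$ gives $D_r(f\circ\beta, g\circ\beta)\le D_r(f,g)$ with no further work.

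For the reverse inequality I would invoke the discrete right homotopy inverse of $\beta$, say $\gamma:X\to X'$, which I take to be $1/s_2$-Lipschitz and to satisfy $\beta\circ\gamma\simeq_{(1,r)} id_X$ (mirroring the choice of a $1/s_2$-Lipschitz left inverse in Proposition \ref{pHi.1}). Suppose $D_r(f\circ\beta, g\circ\beta)=k$, witnessed by an open cover $\{U_0,\dots,U_k\}$ of $X'$ with $(f\circ\beta)|_{U_i}\simeq_{(s_1 s_2, r)}(g\circ\beta)|_{U_i}$, noting that $f\circ\beta$ and $g\circ\beta$ are $s_1 s_2$-Lipschitz. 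Pulling back along the map $\gamma$, which is continuous since it is Lipschitz, the sets $V_i:=\gamma^{-1}(U_i)$ form an open cover of $X$ with $\gamma(V_i)\subseteq U_i$, so the restricted homotopies may be pre-composed with $\gamma|_{V_i}$. By Lemma \ref{pC.0}(2), pre-composing the $(s_1 s_2, r)$-homotopy with the $1/s_2$-Lipschitz map $\gamma$ yields $(f\circ\beta\circ\gamma)|_{V_i}\simeq_{(s_1, r)}(g\circ\beta\circ\gamma)|_{V_i}$: the constants $s_2$ and $1/s_2$ cancel to restore the horizontal scale $s_1$, while pre-composition preserves the time scale $r$.

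It then remains to replace $\beta\circ\gamma$ by the identity up to homotopy. From $\beta\circ\gamma\simeq_{(1,r)} id_X$ I would derive $f\circ\beta\circ\gamma\simeq f$ and $g\circ\beta\circ\gamma\simeq g$, and chain the three relations by Proposition \ref{pr2.2} to conclude $f|_{V_i}\simeq_{(s_1,r)} g|_{V_i}$, so that the cover $\{V_0,\dots,V_k\}$ exhibits $D_r(f,g)\le k$.

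The main obstacle lies precisely in this last replacement. Because $\beta\circ\gamma$ and $id_X$ are maps into $X$ and $f$ is applied afterwards, the homotopy $f\circ\beta\circ\gamma\simeq f$ arises from post-composing the $(1,r)$-homotopy $\beta\circ\gamma\simeq_{(1,r)} id_X$ with $f$; by Lemma \ref{pC.0}(1) this produces an $(s_1, s_1 r)$-homotopy rather than an $(s_1, r)$-homotopy, so the time scale is inflated by the factor $s_1$. This is the genuine asymmetry with Proposition \ref{pHi.1}, where the analogous comparison was a pre-composition and hence scale-preserving. To bring all three homotopies to a common scale so that Proposition \ref{pr2.2} applies and the resulting distance is measured at $r$, I would either work in the regime $s_1\le 1$, in which case $s_1 r\le r$ and the inflated homotopies are a fortiori $(s_1, r)$-homotopies, or absorb the discrepancy using the monotonicity of Proposition \ref{pD.0}; verifying that the scales reconcile to give exactly $D_r$ on both sides is the step demanding the most care.
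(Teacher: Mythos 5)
Your approach is the same as the paper's: one inequality comes from Proposition \ref{pC.2}, and the reverse is obtained by pulling the witnessing cover of $X'$ back along a $1/s_2$-Lipschitz discrete right homotopy inverse of $\beta$ (the paper calls it $\eta$, you call it $\gamma$) and pre-composing, so that the constants $s_2$ and $1/s_2$ cancel via Lemma \ref{pC.0}(2). Up to that point the two arguments agree essentially line by line.

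The divergence is at the final step, and you have put your finger on a real issue. In the paper's chain of implications the term $f|_{U_i}\circ(\beta\circ\eta)|_{V_i}$ is simply rewritten as $f\circ id|_{V_i}$; that is, the paper treats the discrete right homotopy \emph{inverse} as a strict section satisfying $\beta\circ\eta=id_X$, and the replacement of $\beta\circ\eta$ by $id_X$ up to $(1,r)$-homotopy --- precisely the step you isolate --- is never actually carried out. Your analysis of that step is correct: post-composing $\beta\circ\gamma\simeq_{(1,r)} id_X$ with the $s_1$-Lipschitz map $f$ gives, by Lemma \ref{pC.0}(1), only an $(s_1,s_1r)$-homotopy $f\circ\beta\circ\gamma\simeq f$, so transitivity (Proposition \ref{pr2.2}) can only be applied after all three homotopies are brought to a common time scale. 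When $s_1\le 1$ this common scale is $r$ and your proof closes. But your fallback via Proposition \ref{pD.0} does not work: for $s_1>1$ the chained homotopy only shows $D_{s_1r}(f,g)\le k$, and monotonicity gives $D_{s_1r}(f,g)\le D_r(f,g)$, which is the wrong direction --- no upper bound on $D_r(f,g)$ follows. So your argument is complete exactly in the regime $s_1\le 1$ (or if the homotopy $\beta\circ\gamma\simeq id_X$ is assumed at the finer scale $(1,r/s_1)$); for general $s_1$ the gap remains, but it is a gap the paper's own proof shares, concealed by the unjustified identification $\beta\circ\eta=id_X$.
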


\begin{proof} 
By Proposition \ref{pC.2}, $D_r(f \circ \beta, g \circ \beta) \leq D_r(f,g)$. Assume that $ D_r(f \circ \beta, g \circ \beta)=k$. Then there are $k+1$ open subsets $U_i\subset X'$, covering $X'$ such that $ (f \circ \beta) |_{U_i} \simeq_{(s_1s_2, r)} (g \circ \beta)|_{U_i}$. Let $\eta:X \to X'$ be an $1/s_2$-Lipschitz map being a  right homotopy inverse of $\beta$. Put $V_i = \eta^{-1} (U_i)$, $i =0, \ldots, k$. Since $U_i$'s cover $X'$, their inverse images $V_i$'s cover $X$ and also $\eta (V_i) \subseteq U_i$.
Moreover,  $f|_{V_i} \simeq_{(s_1,r)} g|_{V_i}$ holds as seen below
\begin{align*} 
 (f \circ \beta )|_{U_i} \simeq_{(s_1s_2, r)} (g \circ \beta) |_{U_i} & => (f \circ \beta )|_{U_i} \circ \eta|_{V_i} \simeq_{(s_1s_2\frac{1}{s_2}, r)} (g \circ \beta)|_{U_i} \circ \eta|_{V_i} \\ 
&=> f |_{U_i} \circ (\beta \circ \eta)|_{V_i} \simeq_{(s_1, r)} g|_{U_i} \circ (\beta \circ \eta)|_{V_i} \\
 & => f  \circ id|_{V_i} \simeq_{(s_1, r)} g \circ id|_{V_i} \\
 &=> f|_{V_i} \simeq_{(s_1, r)} g|_{V_i}. 
\end{align*} 
Since $D_r(f,g)$ is the smallest number of such covers, $D_r(f, g) \leq D_r(\alpha \circ f, \alpha \circ g)$. 
\end{proof} 

\begin{proposition}\label{pHi.3} 
Let $X$, $Y$, $X'$ and $Y'$ be metric spaces and  $f,g: X \to Y$ be two $s_1$-Lipschitz maps. Suppose that $\alpha:Y \to Y' $ and $\beta: X' \to X$ are $1$-Lipschitz maps with discrete left homotopy inverse and  discrete right homotopy inverse, respectively, such that $\alpha \circ f \circ \beta \simeq_{(s_1,r)} f'$ and $\alpha \circ g \circ \beta \simeq_{(s_1,r)} g'$. Then, $D_r(f, g) = D_r(f', g')$. 
\[\begin{xy} 
\xymatrix{ 
& X \ar[r]<2pt>^{f} 
\ar[r]<-2pt>_{g} 
&Y \ar[d]^{\alpha} \\ 
& X' \ar[r]<2pt>^ {f'} 
\ar[r]<-2pt>_{g'} 
\ar[u]_{\beta} 
& Y'} 
\end{xy}\] 
\end{proposition}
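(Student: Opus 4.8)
The plan is to obtain the equality by chaining together three results already in hand: Proposition \ref{pHi.1}, Proposition \ref{pHi.2}, and Remark \ref{pD.1}(3). The feature to exploit is that both $\alpha$ and $\beta$ are assumed $1$-Lipschitz; this is exactly what makes every scale-change factor occurring in Proposition \ref{pHi.1} collapse to $1$, so that the parameter $r$ is preserved at each stage rather than drifting to $s_2 r$ as it would for a general $s_2$-Lipschitz map. The commuting square displayed after the statement indicates the route: pass from $(f,g)$ to $(\alpha\circ f,\alpha\circ g)$, then to $(\alpha\circ f\circ\beta,\alpha\circ g\circ\beta)$, and finally to $(f',g')$ through the given homotopies.

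Concretely, I would first apply Proposition \ref{pHi.1} to $f,g$ using the $1$-Lipschitz map $\alpha$ (taking $s_2=1$ there), which yields $D_r(f,g)=D_{s_2 r}(\alpha\circ f,\alpha\circ g)=D_r(\alpha\circ f,\alpha\circ g)$. Next I would note that $\alpha\circ f$ and $\alpha\circ g$ are again $s_1$-Lipschitz, being composites of the $s_1$-Lipschitz maps $f,g$ with the $1$-Lipschitz map $\alpha$, and then apply Proposition \ref{pHi.2} to the pair $\alpha\circ f,\alpha\circ g$ with the $1$-Lipschitz map $\beta$, which gives $D_r(\alpha\circ f,\alpha\circ g)=D_r(\alpha\circ f\circ\beta,\alpha\circ g\circ\beta)$. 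Finally, since by hypothesis $\alpha\circ f\circ\beta\simeq_{(s_1,r)}f'$ and $\alpha\circ g\circ\beta\simeq_{(s_1,r)}g'$, Remark \ref{pD.1}(3) delivers $D_r(\alpha\circ f\circ\beta,\alpha\circ g\circ\beta)=D_r(f',g')$. Stringing these three equalities together produces $D_r(f,g)=D_r(f',g')$.

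Since each individual step is an invocation of a prior result, I expect no genuine obstacle, only the bookkeeping of Lipschitz constants needed to check that every hypothesis is met. The one point requiring care is that Remark \ref{pD.1}(3) demands all four maps in play be $s_1$-Lipschitz: here $\alpha\circ f\circ\beta$ and $\alpha\circ g\circ\beta$ are $1\cdot s_1\cdot 1=s_1$-Lipschitz precisely because \emph{both} $\alpha$ and $\beta$ are $1$-Lipschitz, and any map $(s_1,r)$-homotopic to an $s_1$-Lipschitz map is itself $s_1$-Lipschitz since its endpoints are $s_1$-Lipschitz by condition (2) of Definition \ref{de2.1}; hence $f'$ and $g'$ are $s_1$-Lipschitz. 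Thus the $1$-Lipschitz hypotheses serve the dual purpose of keeping the homotopy hypothesis $\alpha\circ f\circ\beta\simeq_{(s_1,r)}f'$ well-posed and preventing the scale from inflating, which is what would otherwise force a coefficient as in Propositions \ref{pHi.1} and \ref{pC.3}.
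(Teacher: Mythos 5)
Your proposal is correct and follows essentially the same route as the paper: both chain Proposition \ref{pHi.1}, Proposition \ref{pHi.2}, and Remark \ref{pD.1}(3), with the $1$-Lipschitz hypotheses keeping the scale fixed at $r$; the only (immaterial) difference is that you pass through the intermediate pair $(\alpha\circ f,\alpha\circ g)$ whereas the paper passes through $(f\circ\beta,g\circ\beta)$. Your added check that $f'$ and $g'$ are themselves $s_1$-Lipschitz is a nice point of rigor that the paper leaves implicit.
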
 
\begin{proof} 
Since $\alpha \circ g \circ \beta \simeq_{(s_1,r)} g'$ and $\alpha \circ f \circ \beta \simeq_{(s_1,r)} f'$, by Remark \ref{pD.1}(3) we have $D_r(\alpha \circ f \circ \beta, \alpha \circ g \circ \beta) = D_r(f', g')$. 
By Proposition \ref{pHi.1}, since $\alpha$ is $1$-Lipschitz, $D_r(\alpha \circ f \circ \beta,\alpha \circ g \circ \beta) = D_r( f \circ \beta, g \circ \beta)$ 
and since $\beta$ is $1$-Lipschitz again, by Proposition \ref{pHi.2}, $D_r( f \circ \beta, g \circ \beta) =D_r(f, g)$. Thus
\[D_r(f', g')=D_r(\alpha \circ f \circ \beta,\alpha \circ g \circ \beta)=D_r( f \circ \beta, g \circ \beta) =D_r(f, g). \]
\end{proof}

\end{document}